\numberwithin{equation}{section}
\newtheorem{theorem}{Theorem}[section]
\newtheorem{lemma}[theorem]{Lemma}
\newtheorem{thm}{Theorem}[section]
\newtheorem{corollary}[theorem]{Corollary}
\theoremstyle{remark}
\newtheorem{definition}[theorem]{Definition}
\newtheorem{rem}[thm]{Remark}
\newtheorem{remark}{Remark}
\renewcommand{\le}{\leqslant} 
\renewcommand{\ge}{\geqslant} 
\renewcommand{\leq}{\leqslant} 
\renewcommand{\geq}{\geqslant}
\newcommand{\ind}{\mathds{1}}
\newcommand{\eps}{\varepsilon}
\newcommand{\norm}[1]{\left\Vert#1\right\Vert}
\newcommand{\abs}[1]{\left\vert#1\right\vert}
\newcommand{\ie}{\emph{i.e.,}}
\newcommand{\eg}{\emph{e.g.,}}
\newcommand{\equald}{\stackrel{\mathrm{d}}{=}}
\def\qed{ \hfill $\blacksquare$}
\let\ga=\alpha    
     \let\gl=\lambda         \let\gs=\sigma
\newcommand{\cA}{\mathcal{A}}
\newcommand{\cF}{\mathcal{F}}
\newcommand{\cG}{\mathcal{G}}
\newcommand{\mvG}{\boldsymbol{G}}
\newcommand{\mvv}{\boldsymbol{v}}
\newcommand{\mvx}{\boldsymbol{x}}
\newcommand{\mveps}{\boldsymbol{\eps}}
\newcommand{\dN}{\mathds{N}}
\newcommand{\dR}{\mathds{R}}
\newcommand{\dZ}{\mathds{Z}} 
\DeclareMathOperator{\E}{\mathds{E}}
\DeclareMathOperator{\pr}{\mathds{P}}
\DeclareMathOperator{\var}{Var}
\DeclareMathOperator{\cov}{Cov}
\DeclareMathOperator{\geom}{Geom}
\newcommand{\ORA}{\overrightarrow}
\newcommand{\OLA}{\overleftarrow}
\newcommand{\olm}{\overline{m}}
\newcommand{\bXi}{\mathbf{\Xi}}
\newcommand{\OSLA}[1]{\stackrel{\mathrel{\rotatebox[origin=c]{180}{$\rightsquigarrow$}}}{#1}}
\title{Scaling Limit of a Stochastic Clustering Model on $\dR$}
\author{Partha S.~Dey \and S.~Rasoul Etesami \and Aditya S.~Gopalan}
\date{\today}
\begin{document}
\maketitle

\begin{abstract}
    We consider an infinite-dimensional stochastic clustering model on $\dR$. In discrete time, each point of a unit-intensity simple point process moves halfway toward either of its left or right neighbors, chosen uniformly at random. Co-located points are merged into a single point, and the resulting simple point process is rescaled to unit intensity. We show that, when the point processes are shifted so that there is a point at the origin, the dynamics have a unique weak limit when the initial point process is renewal.
    For this limiting point process, the gap distribution has exponential tails.
    We also show that for the time-reversed process and with an appropriate scaling in space, there is a limiting (random) distribution function on $\dR$, whose associated measure assigns to $\dR$ a measure corresponding to the gap between consecutive points. Finally, we discuss several relevant research directions.
\end{abstract}

\setcounter{tocdepth}{1}
\tableofcontents

%%******
\section{Introduction}
Dynamic clustering refers to a class of stochastic or deterministic dynamics in which a configuration of data points (or particles) evolves over time through local interactions~\cite{aldous1999deterministic}. These interactions cause points to aggregate, merge, split, or reorganize, so that the resulting cluster structure is itself time-dependent and emerges from the underlying dynamics. Dynamic clustering is often done via numerical optimization~\cite{li2021survey}.
For large, finite datasets, there are two challenges with this approach.
First, large numerical optimization problems are computationally expensive.
Second, and perhaps more important, is the question of determining stopping criteria for the clustering algorithm.
For finite datasets, continued use of a clustering algorithm often results in all data points being assigned to a single cluster, which is undesirable.

In this work, we address this second question.
Specifically, we study \emph{stochastic} dynamic clustering algorithms on infinite datasets, aiming to determine whether these dynamics possess a stationary measure. If a stationary measure exists, a natural stopping criterion for the clustering dynamics is when one is close to that stationary measure.
For large finite datasets, one can also accept the current clustering when the cluster gap distribution is close to that of the stationary measure for a corresponding infinite dataset.

To our knowledge, this is the first work to directly analyze this type of clustering dynamics on an infinite dataset, with the intent of informing behavior on large finite datasets. Indeed, we demonstrate this behavior via simulation for two simple algorithms below. The algorithms are as follows.
\begin{itemize}
 \item \textbf{Algorithm 1:} Each point moves halfway towards either its left or right neighbor, chosen uniformly at random. Co-located points are merged. The space is always rescaled so that the point process has unit intensity.
 \item \textbf{Algorithm 2:} Each point moves halfway towards either its left or right neighbor, chosen such that conditionally, its movement is mean-zero. Here also, co-located points are merged. The space is always rescaled so that the point process has unit intensity.
\end{itemize}
Simulation results for the two algorithms are presented in Figure~\ref{fig:SRW}. Surprisingly, these two algorithms appear to exhibit different behavior: for Algorithm 1, the limit distribution of the gaps appears independent of the initial distribution, whereas for Algorithm 2, it appears to depend on the initial distribution of the gaps.

\begin{figure}[htbp]
\centering
 \begin{subfigure}[t]{.45\columnwidth}
  \centering
  \includegraphics[width=\linewidth,trim={.75cm .75cm .75cm .75cm},clip]{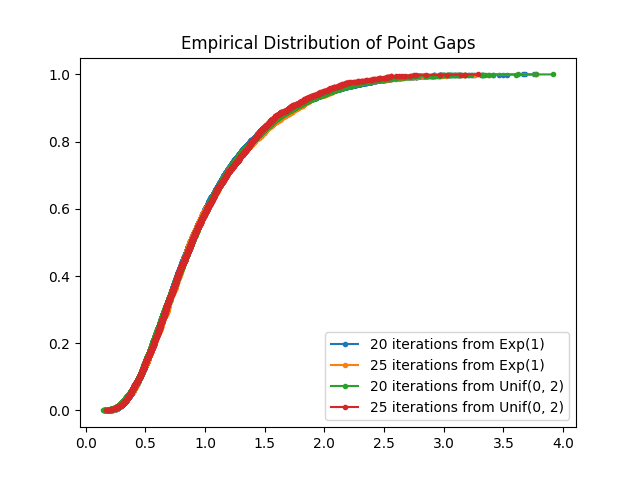}
  \caption{Algorithm 1}
  \label{subfig:srw}
 \end{subfigure}
 \begin{subfigure}[t]{.45\columnwidth}
  \centering
  \includegraphics[width=\linewidth,trim={.75cm .75cm .75cm .75cm},clip]{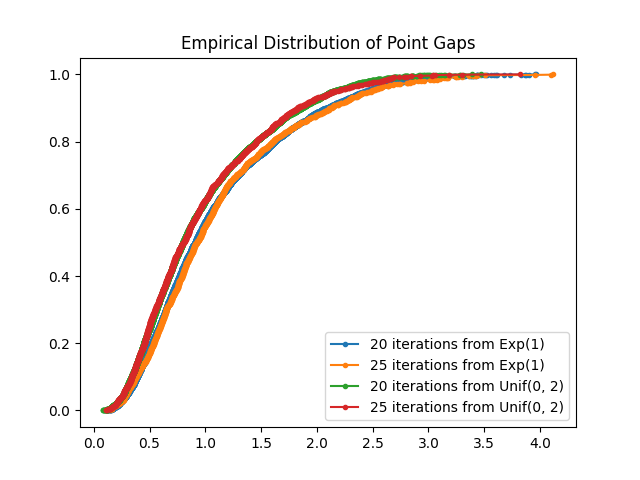}
  \caption{Algorithm 2}
  \label{subfig:zm}
 \end{subfigure}
 \caption{Empirical distribution of point gaps after 20 and 25 iterations, starting from gaps distributed as $\mathrm{Exp}(1)$ and $\mathrm{Unif}(0,2)$.}
 \label{fig:SRW}
\end{figure}

In this paper, we prove that Algorithm 1 has a scaling limit.
That is, (under mild assumptions on the initial data), repeated application of Algorithm 1 converges to a unique limit that is independent of the initial data. Moreover, the size of a typical cluster (the number of points merged into a single point) after appropriate scaling converges to a nontrivial random variable. Finally, we prove the existence of a random measure whose mass gives the limiting scaled gap distribution, and whose length of support gives the limiting scaled cluster size distribution.

Note that while Algorithms 1 and 2 seem similar, even identifying the scaling required in Algorithm 2 is a nontrivial task.
For Algorithm 1, the factor $\frac34$ appears due to the fact that neighboring points will co-locate with probability $\frac14$, independently of their spatial positions.
It is far from obvious what the correct scaling factor should be for Algorithm 2, and even if that scaling factor should be independent of the initial data.
Thus, there is a significant amount of interesting and important future work to be done in terms of understanding the limiting properties of these kinds of dynamics.

\subsection{Related Work}

Clustering problems on static data have been widely studied in various areas, such as the minimum distortion problem in data compression~\cite{gersho2012vector}, facility location problems~\cite{drezner1996facility}, and $k$-means clustering in pattern recognition~\cite{likas2003global}, among many others~\cite{xu2005survey}. However, recent advances in geographic information systems have led to growing interest in developing \emph{dynamic} clustering algorithms for ensembles of moving objects or for data points that evolve over time. Applications of dynamic clustering include coverage control for mobile sensing networks~\cite{cortes2004coverage,sharma2011entropy,xu2013clustering}, the development of automatic deployment and tracking/monitoring algorithms for surveillance systems~\cite{frazzoli2004decentralized,zhang2008continuous}, modeling database demand variability and data placement in web applications~\cite{ghanbari2011tracking,etesami2024distributed}, clustering spatio-temporal brain signal dynamics~\cite{nunez2006electric}, and traffic routing through traffic-flow clustering~\cite{sheu2002fuzzy}.

From a different perspective, dynamic clustering has also emerged in a wide range of multi-agent averaging dynamics, which have been studied extensively from a control-theoretic viewpoint, with applications ranging from distributed coordination and formation~\cite{hendrickx2012convergence,jadbabaie2003coordination}, to modeling animal flocking patterns~\cite{chazelle2014convergence}, and to opinion dynamics~\cite{degroot1974reaching,friedkin1999social,hegselmann2002opinion}. While averaging dynamics were initially used to model consensus in multi-agent systems (e.g., robotic rendezvous~\cite{bullo2009distributed} or consensus in social networks~\cite{degroot1974reaching}), these models were later extended to study disagreement (clustering) in addition to consensus.

For instance, in the context of opinion dynamics, Friedkin and Johnsen were the first to adopt this approach, proposing a dynamical system that captures opinion polarization into multiple clusters~\cite{friedkin1999social}. This model was later extended by Hegselmann and Krause~\cite{hegselmann2002opinion} and belongs to the broader class of bounded-confidence dynamics~\cite{bernardo2024bounded}. Beyond bounded-confidence models, dynamic clustering has also been studied in broader classes of nonlinear and signed interaction systems, where antagonistic or repulsive interactions naturally lead to the emergence of multiple opinion groups~\cite{altafini2013consensus,bernardo2024bounded}. Analyzing the convergence and limit points of such dynamics, however, poses major challenges compared to standard averaging models, whose convergence properties can often be studied using classical tools from Markov chains or graph theory. These difficulties become even more pronounced when additional stochasticity is introduced into the dynamics~\cite{wang2017noisy,dey20252r}.

As another example, averaging dynamics on state-dependent graphs (e.g., where interactions depend on agents' positions) are closely related to dynamic clustering, especially when the data points are spatially distributed and evolve over time, and the goal is to cluster them based on similarity~\cite{hegselmann2002opinion,aggarwal2003framework,Bhatti2019}. Similarly, in dynamic community detection, one aims to understand how diffusion-based averaging dynamics on time-varying graphs can reveal communities (clusters) that persist over time~\cite{delvenne2010stability}. In both cases, the co-evolution of agents' states and network structure introduces dynamic coupling among agents, requiring new analytical approaches to characterize limit points and long-term behavior. These complementary perspectives highlight that dynamic clustering is a robust and structurally rich outcome of state-dependent interactions, extending well beyond classical consensus frameworks.

There are a few references on averaging processes on graphs~\cite{al12}, in which an edge is selected at random, and the incident vertex values are averaged. This model has been generalized to hypergraphs in~\cite{spiro22}. Recent work by~\cite{cdsz22} studies the cutoff phenomenon for the average process on the complete graph, in connection with a question by Bourgain related to quantum computing. However, our model differs from the average process, as all particles move simultaneously at each time step. In Section~\ref{sec:toy}, we give an example of a toy model of follow-the-leader that exhibits behavior similar to the average process and provides motivation for analyzing the dual weight process and its distribution function.

Motivated by these applications, we propose and study a new class of averaging dynamics for spatial data clustering, which can be viewed in the same spirit as the Hegselmann-Krause opinion dynamics~\cite{hegselmann2002opinion}, but over infinitely many agents and under a simplified neighborhood structure. To our knowledge, no work in the data clustering literature directly considers an infinite data set.
There are, however, some recent works in the probability literature that address problems similar to ours.
Baccelli and Khaniha~\cite{khaniha2025hierarchical} consider an infinite-dimensional clustering model on $\dR^2$; however, they are unable to establish a scaling limit for their dynamics.
Angel, Ray, and Spinka~\cite{angel2023tale} study a model on $\dR$ that behaves similarly to clustering, although it is not strictly a clustering model.
They leave the question of whether a scaling limit exists for their dynamics as an open problem. Even identifying the correct scaling is a challenging problem for either of these two papers.
Further, identifying the scaling for Algorithm 2 is also a challenging problem; our results are facilitated by the straightforward scaling for Algorithm 1.

\subsection{Contributions and Organization}

The main contributions of this work are as follows.

\begin{itemize}
 \item We propose and analyze a new class of dynamical clustering algorithms for spatial $\mathbb{R}$-valued data, where each point randomly clusters with its left or right neighbor. To our knowledge, this is the first study to examine such dynamics on an infinite dataset.
 \item For this model, we show that if the initial point process is a renewal process and the point processes are shifted so that a point lies at the origin, a unique weak limit exists for both scaled positions and cluster sizes, which is independent of the initial data.
 \item Our proof combines a time-reversal construction with stochastic duality and a recursive analysis of reverse-time weights, which may be useful for other infinite-dimensional clustering dynamics. The results about concentration for stationary renewal processes may also be of independent interest.
\end{itemize}

Our analysis of Algorithm $1$ relies crucially on the following two properties.
First, we have order preservation: that is, if a point $u$ starts to the left of a point $v$, it will never be to the right of $v$ (this property also holds for Algorithm 2).
Second, the time-reversal dynamics can be described as the composition of certain independent (random) linear operators.
This provides a well-behaved structure from which we can show our results.
Independence in the second property fails for Algorithm 2.

This paper is organized as follows. In Section~\ref{sec:prelim}, we provide relevant background materials and notations.
In Section~\ref{sec:Model}, we describe the model and main results.
We prove the main results in Section~\ref{sec:conv-profs}.
In Section~\ref{sec:tech}, we prove the technical results in this article.
Finally, in Section~\ref{sec:future-work}, we discuss future work.

%%******

\section{Background Material and Notations}\label{sec:prelim}

In this section, we first provide definitions and preliminary results, along with the notations used throughout the paper.

\begin{definition}
    A \emph{point process} on $\dR$ is a (possibly random) countable collection of points on $\dR$, which is, without loss of generality, a non-decreasing element of $\dR^\dZ$.
    A point process $\Xi$ is \emph{simple} if $\pr(\text{for all $x \in \dR$, } \text{more than one point of $\Xi$ is at $x$}) = 0$.
    A simple point process $\Xi$ on $\dR$ is \emph{renewal} if the gaps between successive points are independent and identically distributed.
\end{definition}

\begin{definition}
    A point process $\Xi$ on $\dR$ is \emph{non-simple} if $$\pr(\text{for some $x \in \dR$, } \text{more than one point of $\Xi$ is at $x$}) > 0.$$
    For a non-simple point process $\Xi$, the \emph{simplified point process} corresponding to $\Xi$ is a simple point process with exactly one point at each location where $\Xi$ has a point.
\end{definition}

\begin{definition}
    A point process $\Xi$ is \emph{stationary} if for any two sets $S_1, S_2 \subset \dR$ of equal and finite Lebesgue measure, the distributions of the number of points in $S_1$ and $S_2$ are equal. If $\Xi$ is a stationary point process, the \emph{Palm-shifted point process} $\Theta \Xi$ is the point process constructed by shifting all points of $\Xi$ such that the $0$-indexed point of $\Xi$ is at the origin.
\end{definition}
These definitions can be readily restricted to define point processes on $\mathbb{Z}$, and we will do so without further mention.

\begin{definition}
    Let $N(t)$ denote the number of points of a stationary point process $\Xi$ in $(0, t]$, for $0 \neq t \in \dR$. The \emph{intensity} of $\Xi$ is $\gl  := \lim_{t \to \pm\infty}\E N(t)/t$, provided it exists.
\end{definition}
The intensity of a point process can be readily extended to be spatially inhomogeneous or even random, but we will not require that level of generality in this paper; thus, we provide only this restricted definition.

\begin{lemma}[{\cite[Theorem 6.1]{k73}}]
    For Palm-shifted point processes, convergence of finite-dimensional distributions implies tightness.
    Thus, convergence of finite-dimensional distributions is equivalent to weak convergence.
\end{lemma}

\begin{definition}
    Let $\Gamma^{(t)}$ and $\eta^{(t)}$ be Markov processes on measurable spaces $S_1$ and $S_2$, respectively.
    The processes $\Gamma^{(\cdot)}$ and $\eta^{(\cdot)}$ are \emph{stochastically dual} with respect to a function $h: S_1 \times S_2 \to \dR$ if for all $t \in \dZ, \Gamma^{(0)} \in S_1, \eta^{(0)} \in S_2$, we have:
    $$\E_{\Gamma^{(0)}}h(\Gamma^{(t)},\eta^{(0)}) = \E_{\eta^{(0)}}h( \Gamma^{(0)}, \eta^{(t)}).$$
\end{definition}
The intuitive idea of stochastic duality is that if certain functionals of a Markov process are difficult to compute, those same functionals may be more readily computed for a stochastic dual process. In this paper, we use time reversal as the stochastic dual.

\subsection{Notations}
We adopt the following notation in this paper. For a process $X$, $\ORA{X}^{(t)}$ denotes the forward-time evolution, and $\OLA{X}^{(t)}$ denotes the reverse-time evolution. For a point process $\Xi$, $\Theta \Xi$ denotes the Palm-shifted point process. Convergence of random variables will be denoted as follows: $X_n \xrightarrow{\mathrm{f.d.d.}} X$ denotes convergence of finite-dimensional distributions; $X_n \xrightarrow{L^p} X$ denotes convergence in $L^p$; and $X_n \Rightarrow X$ denotes weak convergence.

\section{Model and Main Results}
\label{sec:Model}
%%******
\subsection{Point Process Model}
We consider the following model. Let $\bXi$ be the space of unit-intensity stationary point processes on $\dR$. Note that $\Xi \in \bXi$ need not have a point at the origin.

Let $\ORA{\Xi}^{(0)} \in \bXi$.
We first define a point process $\hat{\Xi}^{(t+1)}$ in terms of $\ORA{\Xi}^{(t)}$ as follows.
Each point moves halfway towards one of its neighbors, chosen uniformly at random and independently of other points' movements.
If two points move to the same location, they merge so that $\hat{\Xi}^{(t+1)}$ is a simple stationary point process with intensity $\frac{3}{4}$.
Then, $$
 \ORA{\Xi}^{(t+1)} =
 \frac34\times \hat{\Xi}^{(t+1)}
$$ is obtained by re-scaling $\hat{\Xi}^{(t+1)}$ to unit intensity.
Here, we denote the re-scaling to unit intensity by left-multiplying a point process by its intensity: that is, if $\Xi = (\Xi_i)_{i \in \dZ}$, then $c\times\Xi:= (c\Xi_i)_{i \in \dZ}$, for any constant $c > 0$. We refer to Figure~\ref{fig:tree} for an illustration of our dynamics without rescaling to unit intensity.

We remark that we could instead define the dynamics on the space of non-simple point processes.
In that case, our dynamics preserve the unit-intensity trivially.
However, the resultant non-simple point processes consist of ever-increasing numbers of co-located points at ever-increasing distances, and it is difficult to make sense of a stationary measure.
It is more convenient to scale the simplified point process and endow each point with an $\dN$-valued mark equal to the number of initial points co-located at that point in the simplified point process.

\begin{figure}[htbp]
 \centering
 \includegraphics[width=1\linewidth]{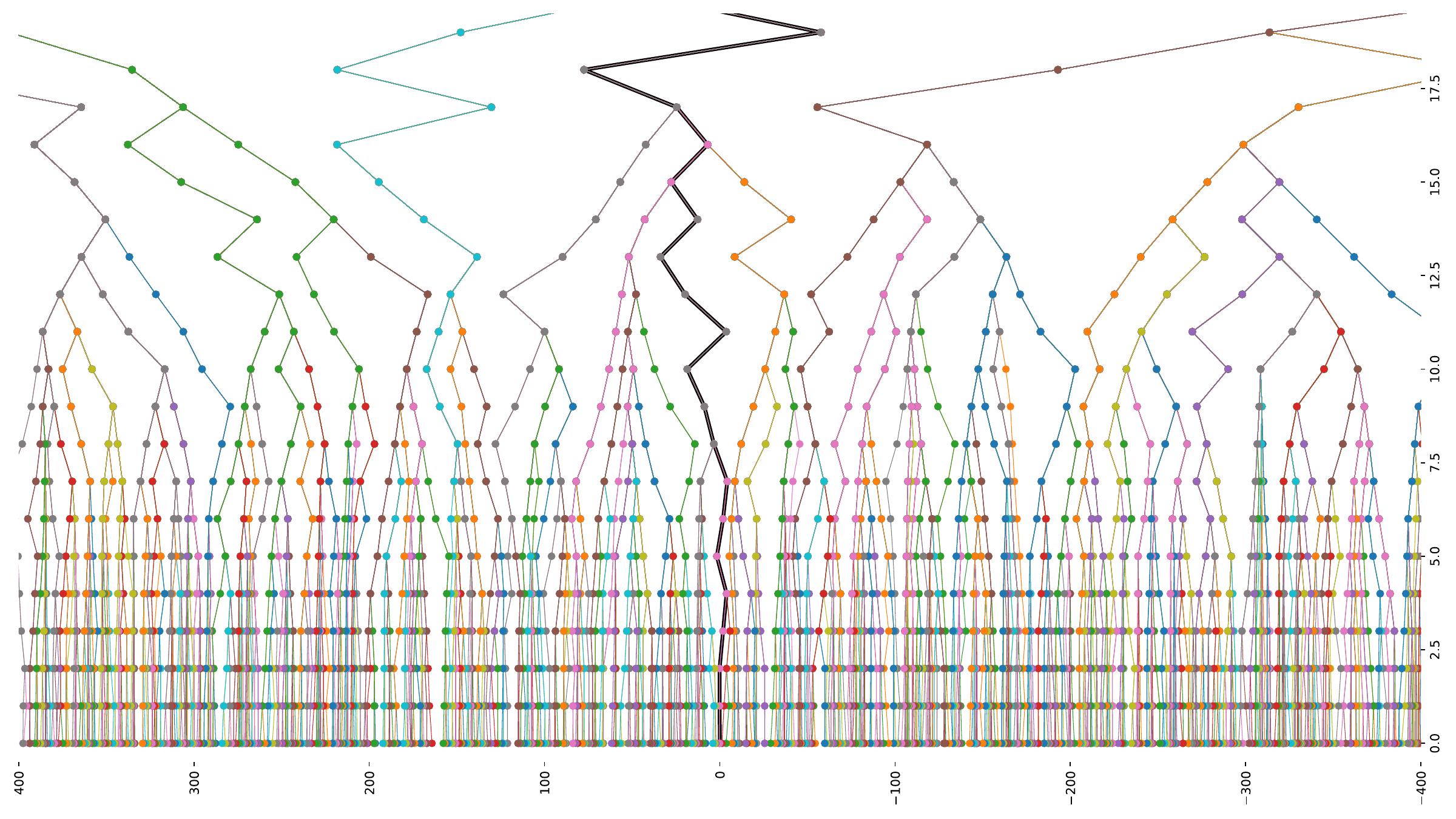}
 \caption{A simulation of the point process model, without space re-scaling.}
 \label{fig:tree}
\end{figure}

\subsubsection{Informal Statements of Results}

Here, we state our main results informally, followed by their formal statements in Section~\ref{ssec:main-results}.

First, we prove that the (Palm-shifted) dynamics of $\Xi^{(t)}$ are ergodic; that is, they admit a unique limit point.
We then show that the tail of the limiting gap distribution is exponential.
We also establish a scaling limit for the number of initial points that have merged with the initially $0$-indexed point.
These results are obtained via a time-reversal argument; in addition, we show that the time-reversed process is a stochastic dual of the original dynamics. 

Our result is about the ergodicity of a certain infinite-dimensional Markov process that describes Algorithm 1.
For dynamic clustering, this result means that independently of the initial data, the distribution near which the clustering should be accepted is the same.
However, the limiting (non-simple before cluster size scaling) point process is \emph{not} renewal: there are dependencies between the size and locations of neighboring clusters that arise.
Thus, the behavior of Algorithm 1 is a ``smoothing'' of the initial data in some sense, but nevertheless it is this ``smoothing'' that allows us to prove the result -- for infinite dimensional Markov processes, it is not typical for a stationary measure to be unique.
Notice that in Figure~\ref{fig:SRW}, this behavior does not seem to hold for Algorithm 2.
This is a more ``natural'' behavior, but it requires the development of fundamentally different tools than the ones we employ here for the simpler Algorithm 1.

In Theorem~\ref{thm:weight-sum-convergence}, we examine the dynamics of the time-reversed process in greater detail to show how the limiting gap distribution can be iteratively constructed; this requires the gap sequence model and its time reversal as defined in Section~\ref{ssec:time-reversal}.

\begin{remark}
    In our model, each point moves towards its right neighbor independently with probability $\frac12$, so the scaling factor is $\frac34 = 1 - \frac12\cdot\frac12$.
    For a model where points move to the right with probability $p$ and to the left with probability $1-p$, one can study it similarly with the scaling $1-p(1-p)$; however, there will be drift at speed $2p-1$.
    This drift is obviously to the right when $p > \frac12$ and to the left when $p < \frac12$, and that can be interpreted from the sign of $2p-1$.
    All of our results have analogues in this more general situation.

    We expect that analogues of our results also hold when each particle moves halfway towards one of its $k$ nearest neighbors in each direction.
    In this setting, we no longer have order conservation: a point $u$ that starts to the left of another point $v$ could end up to the right of $v$.
    This complicates the analysis significantly.
    In the model we study, a point $u$ that starts at another point $v$'s left either remains to the left of $ v$ or co-locates with $v$.
\end{remark}

%%******
\subsection{Gap Sequence Model}
We consider a second formulation of the model, in which the dynamics can be considered as a product of certain $\dZ$--indexed random matrices.
In this formulation of the model, rather than tracking the point locations, we track the gaps between points.
The original point process model can be recovered by also tracking a real-valued quantity representing the location of the $0$--indexed point.

Indeed, let $\ORA{\Gamma}^{(t)} \in \dR^\dZ$ be the \emph{gap sequence} associated to $\ORA{\Xi}^{(t)}$; that is,
$$
 \ORA{\Gamma}^{(t)}_i := \ORA{\Xi}^{(t)}_{i+1} - \ORA{\Xi}^{(t)}_i.
$$
Notice that the components of $\ORA{\Gamma}^{(t)}$ are identically distributed, but generally \emph{not} independent. In what follows, we treat $\ORA{\Gamma}^{(t)}$ as a $\dZ$--indexed column vector.

We define two random linear operators, $\cA^{(t)}$ and $\cF^{(t)}$, which we refer to as \emph{averaging} and \emph{folding}, respectively.
One iteration of the dynamics is expressed by the dynamics $$\ORA{\Gamma}^{(t+1)} = \cF^{(t)}\cA^{(t)} \ORA{\Gamma}^{(t)},$$ where averaging corresponds to the point movements, and folding corresponds to merging of co-located points.

To define these operators formally as $\mathbb{Z}^2$-indexed random matrices, let us use the notation $\cA^{(t)}_i, \cF^{(t)}_i$ to denote the $i$-th rows of $\cA^{(t)}, \cF^{(t)}$ respectively, where $i \in \dZ$. Note that we have,
\begin{align}\label{eq:lincomb}
 \ORA{\Gamma}^{(t)} = \cF^{(t-1)}\cA^{(t-1)}\cdots \cF^{(2)}\cA^{(2)} \cF^{(1)}\cA^{(1)} \ORA{\Gamma}^{(0)} \ \ \ \forall t\geq 1,
\end{align}
which implies that for any $i\in\dZ$, the element $\ORA{\Gamma}^{(t)}_i$ can be written as a non-negative random linear combination of the entries in $\ORA{\Gamma}^{(0)}$. Also, let $e_i$ denote the $\mathbb{Z}$-indexed standard basis vector that is equal to $1$ in the $i$-th component and $0$ in all other components.\medskip

\noindent\textbf{Averaging:}
The averaging operator is defined row-wise as follows, which is clear from the dynamics. For all $i\in\dZ$,
\begin{equation*}
 \cA^{(t)}_i :=
 \begin{cases}
  \frac{1}{2}(e_i + e_{i+1}) & \text{ with probability } 1/2, \\
  \frac{1}{2}(e_i + e_{i-1}) & \text{ with probability } 1/2.
 \end{cases}
\end{equation*}
Here, the randomness in each row is independent.\medskip

\noindent\textbf{Folding:}
Observe that point merges at time $t$ occur according to a stationary $\dZ$--valued renewal process $\tau^{(t)}$, and that for $t \neq s$, $\tau^{(t)}$ and $\tau^{(s)}$ are i.i.d. It is easy to see that the inter-renewal distribution of $\tau^{(t)}$ is the sum of two independent $\geom(1/2)$ random variables. From here, it follows that the rows of the folding matrix $\cF^{(t)}$ can be expressed as follows.
For a process $\tau\subseteq\dZ$, let $N_{\tau}(\cdot)$ denote the counting function of $\tau$, where
\begin{align*}
    N_{\tau}(i) = \abs{\tau\cap [0,i]} - \abs{\tau\cap [-i,0)}
 \text{ for } i\in\dZ.
\end{align*}
We have, with $\tau=\tau^{(t)}$,
\begin{equation*}
 \cF^{(t)}_i :=
 \begin{cases}
  \frac34(e_{i+N_\tau(i)-1} + e_{i+N_\tau(i)}) & \text{ for } i\in \tau^{(t)}\cap[0,\infty), \\
  \frac34(e_{i+N_\tau(i)} + e_{i+N_\tau(i)+1}) & \text{ for } i\in \tau^{(t)}\cap(-\infty,0), \\
  \frac34e_{i+N_{\tau}(i)}                     & \text{ for } i \not\in \tau^{(t)}.
 \end{cases}
\end{equation*}
At the indices of the mergings, we need to correspondingly add the two merged gaps; otherwise, we keep the original gap.
To do this, we also need to keep track of the number of mergings across space.

%%******
\subsection{Time Reversal of Gap Sequence Model}
\label{ssec:time-reversal}
The forward-time gap sequence model consists of averaging and then folding.
Thus, the time reversal consists of ``un--folding,'' and then ``un--averaging.'' The reverse-time process is as follows.

Recall that in the forward dynamics, $1/4$ of the points are lost due to merging. In the time reversal, $1/3$ of the points should thus split into two new points.
Indeed, in the forward process, the interval between points that merge with their left neighbors is given by the sum of two independent $\geom(1/2)$ random variables. Thus, in the time-reversed process, points ``un-merge'' with interval $\geom(1/2) + \geom(1/2) - 1$, where the geometrics are again independent. Thus, let $\rho^{(t)}$ be i.i.d.~renewal processes with gap distribution
$$
    Y \sim \geom(1/2) + \geom(1/2) - 1,
$$
where the geometric random variables are independent and take the minimum value $1$. We have $\E Y = 2+2-1=3$. The requisite coupling between the merged indices and the point processes $\rho^{(t)}$ used in the un-merging is obvious, and we omit it for the sake of brevity; see Figure~\ref{fig:FR-dynamics} instead.
\begin{figure}[htbp]
 \centering
 \includegraphics[width=0.8\linewidth]{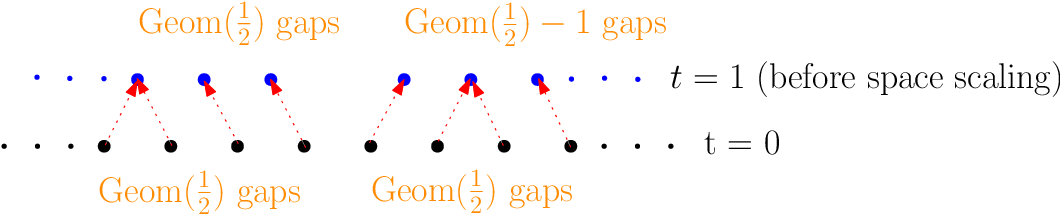}
 \caption{Figure of forward and reverse dynamics.}
 \label{fig:FR-dynamics}
\end{figure}

Let $\cG$ denote the space of nonnegative integer-valued sequences indexed by $\dZ$. The reverse-time dynamics can be expressed in terms of a $\cG$--valued Markov process $\eta^{(t)}$.
The interpretation of $\eta^{(t)}$ is that each element $\eta^{(t)}_i$ is a \emph{weight}, and the gaps at time $t$ can be constructed by summing and re-scaling the appropriate subset of weights.
The exposition is cleaner with integer-valued weights; since averaging induces a factor of $\frac{1}{2}$, we will need to re-scale the integer-valued weights by $\frac{3}{8}$, instead of $\frac{3}{4}$, to compensate for this.
The process $\OLA{\eta}^{(t)} \in \dZ_+^\dZ$ evolves as follows.
\begin{itemize}
 \item  For indices $i \not\in \rho^{(t+1)}$, replace the element $\OLA{\eta}^{(t)}_i$ by the tuple $(\OLA{\eta}^{(t)}_i, \OLA{\eta}^{(t)}_i)$. This corresponds to ``un-averaging.''
 \item  For indices $i \in \rho^{(t+1)}$, replace the element $\OLA{\eta}^{(t)}_i$ by the tuple $(\OLA{\eta}^{(t)}_i, 2\OLA{\eta}^{(t)}_i,\OLA{\eta}^{(t)}_i)$. This corresponds to ``un-folding'' and then ``un-averaging.''
 \item  Finally, construct a new sequence $\OLA{\eta}^{(t+1)}$ by adding the right-most element of the $i$-th tuple with the left-most element of the $(i+1)$-th tuple and removing the parentheses.
\end{itemize}

Here, this summation corresponds to the fact that the same points contribute to two different gaps after one iteration of the time-reversed dynamics.
The sequence $\OLA{\eta}^{(t+1)}$ is indexed so that the $0$--indexed point is the sum of the left-most element of the $0$--th tuple and the right-most element of the $(-1)$--st tuple.
For example, in Figure~\ref{fig:FR-dynamics}, the un-mergings correspond to the entire gap that gets deleted in the merging, but also half of each of the preceding and following gaps.

%%******
%%******
\subsection{Main Results}
\label{ssec:main-results}

\begin{theorem}\label{thm:gap-sequence-convergence}
    Let $\ORA{\Xi}^{(0)}$ be a renewal process with finite inter-renewal variance.
    The following holds:
 \begin{equation}
  \ORA{\Xi}^{(t)} \xrightarrow{\text{f.d.d.}} \ORA{\Xi}^{(\infty)},
  \label{eq:HK-convergence}\end{equation}
    where the limit is independent of $\ORA{\Xi}^{(0)}$.
    If $\Theta$ is the Palm shift, then
 \begin{align*}
  \Theta\ORA{\Xi}^{(t)} \Rightarrow \Theta\ORA{\Xi}^{(\infty)}.
 \end{align*}
    Moreover, the gap distribution of $\ORA{\Xi}^{(\infty)}$ has an exponentially decaying tail.
\end{theorem}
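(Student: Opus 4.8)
The plan is to exploit the linear representation \eqref{eq:lincomb}: writing $M^{(t)} := \cF^{(t-1)}\cA^{(t-1)}\cdots\cF^{(1)}\cA^{(1)}$, each coordinate is $\ORA{\Gamma}^{(t)}_i = \sum_{j\in\dZ} M^{(t)}_{ij}\,\ORA{\Gamma}^{(0)}_j$, where $M^{(t)}\ge 0$ entrywise and is independent of the initial gaps, and where intensity preservation forces the expected row sums $\E\big[\sum_j M^{(t)}_{ij}\big]=1$. The theorem then splits into two tasks: (i) show that the effect of the initial data is asymptotically forgotten, so that the limit, if it exists, cannot depend on the law of $\ORA{\Gamma}^{(0)}$; and (ii) show that the autonomous dynamics on stationary unit-intensity gap laws actually converges. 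First I would record the two standing estimates that feed everything else: a uniform second-moment bound $\sup_t \E[(\ORA{\Gamma}^{(t)}_0)^2]<\infty$, obtained from a one-step Lyapunov estimate on $\ORA{\Gamma}^{(t+1)}=\cF^{(t)}\cA^{(t)}\ORA{\Gamma}^{(t)}$ (the averaging rows contract in $L^2$, while the folding rows only combine a bounded number of neighbors), and the resulting tightness of the laws of $\ORA{\Gamma}^{(t)}$ and of the point processes $\ORA{\Xi}^{(t)}$.

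The engine for (i) is a loss-of-memory lemma. I would couple two initial configurations on the same realization of the averaging/folding randomness, so that their difference is $\sum_j M^{(t)}_{ij}\big(\ORA{\Gamma}^{(0)}_j-\ORA{\Gamma}'^{(0)}_j\big)$; taking the second copy to be the deterministic unit lattice gives $\ORA{\Gamma}^{(t)}_i = S^{(t)}_i + \sum_j M^{(t)}_{ij}\big(\ORA{\Gamma}^{(0)}_j-1\big)$, where $S^{(t)}_i$ is the lattice-initialized value. Conditioning on $M^{(t)}$ and using that the centered renewal gaps are independent with finite variance $\sigma^2$, the conditional variance of the fluctuation is $\sigma^2\sum_j (M^{(t)}_{ij})^2$, so it suffices to prove the diffusivity estimate
\begin{equation*}
\E\,\big\|M^{(t)}_{i\cdot}\big\|_2^2=\sum_{j}\E\big[(M^{(t)}_{ij})^2\big]\xrightarrow[t\to\infty]{}0 .
\end{equation*}
I would prove this by the dual ``two-tracer'' interpretation: normalizing a row to a backward transition kernel, $\sum_j (M^{(t)}_{ij})^2$ is a bounded multiple of the probability that two independent backward lineages started from index $i$ at time $t$ meet at time $0$. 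An averaging step keeps a lineage in place or sends it to a neighbor with equal probability (a lazy symmetric random-walk step), while a folding step only re-indexes by the local merge count $N_\tau(\cdot)$; hence the separation of two lineages behaves like a random walk whose coincidence probability decays, giving the estimate, and so the fluctuation tends to $0$ in $L^2$ for every finite-variance renewal initial law.

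For (ii) I would combine tightness with loss of memory. Tightness gives, through the Cesàro averages of the gap laws and Feller continuity of the one-step map $T$, at least one invariant law $\pi$; one then checks that $\pi$ has summable covariances, so that the loss-of-memory argument applies with $\pi$ as the reference copy as well. Comparing $\ORA{\Gamma}^{(t)}$ started from the renewal law with the copy started from $\pi$ yields $\mathrm{law}(\ORA{\Gamma}^{(t)})\to\pi$ in finite-dimensional distributions, which is exactly \eqref{eq:HK-convergence}, with $\ORA{\Xi}^{(\infty)}$ the process whose Palm gap sequence is $\pi$ and which is manifestly independent of $\ORA{\Gamma}^{(0)}$. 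To upgrade to the Palm statement $\Theta\ORA{\Xi}^{(t)}\Rightarrow\Theta\ORA{\Xi}^{(\infty)}$, I would use that under $\Theta$ the configuration is encoded by the anchored bi-infinite gap sequence $(\ORA{\Gamma}^{(t)}_i)_{i\in\dZ}$ with a point pinned at the origin, whose partial sums are the point positions; f.d.d.\ convergence of the gaps then gives convergence of all finite point-configuration marginals, while the uniform second-moment bound controls the number of points in any bounded window, supplying the tightness in the vague topology needed for weak convergence.

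The main obstacle is the diffusivity estimate in (i), specifically controlling the backward lineages through folding. Folding merges neighboring gaps and re-indexes by the running merge count, which couples the supports of different rows of $M^{(t)}$ and, read backward, lets two lineages coalesce; a naive random-walk bound on the coincidence probability must be corrected for this coalescence, and one must show it does not trap both lineages at a common index with non-vanishing probability. Establishing the clean $t\to\infty$ decay uniformly in $i$, and separately verifying that the invariant law $\pi$ has summable covariances so the lemma can be bootstrapped in (ii), are the two points I expect to require the most care.
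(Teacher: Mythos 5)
Your washout step (i) is, in substance, the same computation the paper performs: there too, the influence of the initial renewal data is killed by showing that the conditional variance $\sigma^2\sum_j (M^{(t)}_{ij})^2$ of the centered term vanishes. But the paper obtains this decay deterministically and in one line from the dual integer-weight structure: reading the row of $M^{(t)}$ backward in time as the weight sequence $\OLA{\eta}^{(t)}$, one has $\max_i \OLA{\eta}^{(t)}_i \le 2^t$ surely, hence $\sum_i \bigl((3/8)^t\OLA{\eta}^{(t)}_i\bigr)^2 \le (3/4)^t\,\OLA{M}^{(t)}\to 0$ a.s.\ at an exponential rate, where $\OLA{M}^{(t)}=(3/8)^t\sum_i\OLA{\eta}^{(t)}_i$. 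Your proposed two-tracer route is both harder and shakier than you acknowledge: the rows of $M^{(t)}$ are not probability vectors (row sums are random with mean one), so identifying $\|M^{(t)}_{i\cdot}\|_2^2$ with a bounded multiple of a lineage-meeting probability requires a uniform comparability of entries that folding destroys; the deterministic $\ell^\infty$ bound makes all of this unnecessary.

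The genuine gap is in your step (ii). The paper never constructs an invariant law abstractly; the missing idea is time reversal/duality (Corollary~\ref{cor:duality}). Reversing time makes the fresh renewal noise $\rho^{(t+1)}$ act \emph{last}, so the rescaled total dual mass $\OLA{M}^{(t)}$ is a positive unit-mean martingale (with $L^2$ boundedness from the exponential covariance decay of $\rho$), and its a.s.\ and $L^2$ limit $\OLA{M}^{(\infty)}$ \emph{is} the limiting gap — measurable with respect to the dynamics noise alone, whence independence from $\ORA{\Gamma}^{(0)}$ is immediate. Forward in time this structure is invisible: new matrices multiply on the left, so the lattice-initialized row-sum process $S^{(t)}_i$ you would need to converge is not a martingale in the forward filtration, and your substitute (Krylov--Bogolyubov via tightness and Feller continuity, then bootstrapping the loss-of-memory lemma with $\pi$ as reference) does not close. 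Concretely: you must verify that the abstract invariant law $\pi$ has summable gap covariances, but the paper itself notes the limit is not renewal and cannot be identified, and your method provides no handle on $\pi$'s covariance structure; moreover, Feller continuity of the merge-and-rescale map (merging is discontinuous on configurations) and your one-step Lyapunov bound $\sup_t\E\bigl[(\ORA{\Gamma}^{(t)}_0)^2\bigr]<\infty$ (folding adds size-biased pairs of gaps, so the naive contraction estimate is not automatic) are both unproven, whereas the paper gets the moment bound for free from the dual martingale. In short, duality converts the limit-existence problem you outsource to abstract ergodic theory into an explicit martingale convergence, and without it your argument establishes only that the law of $\ORA{\Gamma}^{(t)}$ asymptotically forgets its initial condition, not that it converges.
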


We do not know the distribution of the Palm shift. The limit is not renewal, and thus it is difficult to identify; we leave it as an open problem. We use an appropriate martingale structure for the time-reversed process to establish Theorem~\ref{thm:gap-sequence-convergence}. Using the standard Burkholder-Davis-Gundy inequality for discrete-time martingales gives an $O(p^{3/2})$ upper bound for the $p$-th moment, which is not enough to get exponential decay.  To get an exponential tail decay, we directly control the MGF. The following corollary is a direct consequence of the proof technique of Theorem~\ref{thm:gap-sequence-convergence}.

\begin{corollary} \label{cor:duality}
    The time-reversed process $\left({3}/{8}\right)^t\OLA{\eta}^{(t)}$ is the stochastic dual of the gap sequence model, with respect to the inner product
    $$
        h(\ORA{\Gamma}^{(s)}, \left({3}/{8}\right)^t\OLA{\eta}^{(t)}) := \left\langle \ORA{\Gamma}^{(s)}, \left({3}/{8}\right)^t\OLA{\eta}^{(t)} \right\rangle = \sum_{i \in \dZ}\left(\frac{3}{8}\right)^t\ORA{\Gamma}^{(s)}_i\OLA{\eta}^{(t)}_i.
    $$
\end{corollary}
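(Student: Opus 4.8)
The plan is to establish the duality as a pathwise transpose identity, from which the standard Markov-duality relation $\E[\langle \ORA{\Gamma}^{(t)}, w\rangle \mid \ORA{\Gamma}^{(0)} = \gamma] = \E[\langle \gamma, (3/8)^t \OLA{\eta}^{(t)}\rangle \mid \OLA{\eta}^{(0)} = w]$ follows immediately. The starting point is the linear representation \eqref{eq:lincomb}: writing $C^{(t)} := \cF^{(t-1)}\cA^{(t-1)}\cdots\cF^{(0)}\cA^{(0)}$ for the random, nonnegative coefficient matrix, we have $\ORA{\Gamma}^{(t)} = C^{(t)}\ORA{\Gamma}^{(0)}$, so by bilinearity of the pairing
$$\langle \ORA{\Gamma}^{(t)}, w\rangle = \langle C^{(t)}\ORA{\Gamma}^{(0)}, w\rangle = \langle \ORA{\Gamma}^{(0)}, (C^{(t)})^{\top} w\rangle.$$
The content of the corollary is then the identification $(C^{(t)})^{\top} w = (3/8)^t \OLA{\eta}^{(t)}$ with $\OLA{\eta}^{(0)} = w$ evolving by the reverse dynamics; the integer-valued process $\OLA{\eta}$ carries the combinatorics, while the prefactor $(3/8)^t$ restores the $\tfrac12$ (averaging) and $\tfrac34$ (folding) normalizations accumulated over $t$ steps.

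First I would prove the one-step statement, which by the Markov property and induction on $t$ upgrades to all $t$. Since $C^{(t)}$ is a product, $(C^{(t)})^{\top} = (\cA^{(0)})^{\top}(\cF^{(0)})^{\top}\cdots(\cA^{(t-1)})^{\top}(\cF^{(t-1)})^{\top}$, so it suffices to show that a single reverse step implements $(\cA)^{\top}(\cF)^{\top}$ (un-fold, then un-average) up to the scalar $3/8$. I would treat the two factors separately. For averaging, since $(\cA v)_i = \tfrac12(v_i + v_{i\pm1})$, the transpose $(\cA)^{\top}$ sends a weight $\OLA{\eta}_i$ to a contribution on two adjacent output coordinates, which is exactly the ``un-averaging'' rule $\OLA{\eta}_i \mapsto (\OLA{\eta}_i, \OLA{\eta}_i)$, the factor $\tfrac12$ being absorbed into the rescaling. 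For folding, I would verify that $(\cF)^{\top}$, read through the shift bookkeeping encoded by $N_\tau(\cdot)$, reproduces the ``un-folding'' rule $\OLA{\eta}_i \mapsto (\OLA{\eta}_i, 2\OLA{\eta}_i, \OLA{\eta}_i)$ at split indices and the identity map elsewhere, with split indices distributed as the renewal process $\rho$.

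The main obstacle is exactly this folding/transpose step, because the forward merge-locations form a renewal process $\tau$ with inter-renewal law $\geom(1/2)+\geom(1/2)$ (mean $4$), whereas the reverse splits occur along $\rho$ with law $\geom(1/2)+\geom(1/2)-1$ (mean $3$); the two gap laws differ, reflecting that $1/4$ of points are lost forward while $1/3$ are created backward. The delicate point is therefore to verify that the transpose of the forward folding operator---including the $N_\tau$-induced reindexing and the alignment of the $0$-indexed coordinate---is distributionally the reverse un-folding operator driven by $\rho$, rather than naively reusing the same renewal randomness. I expect this to require a careful column-by-column comparison of $\cF$: a forward merge that feeds a single merged gap into two neighboring output gaps becomes, under transposition, one input weight feeding three consecutive reverse coordinates with multiplicities $(1,2,1)$, and one must check that the induced law on split positions is precisely $\geom(1/2)+\geom(1/2)-1$. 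Once the one-step transpose identity and its $3/8$ normalization are confirmed, bilinearity together with the independence across $t$ of the pairs $(\cA^{(t)}, \cF^{(t)})$ yields the claimed duality for all $s, t$, and the expectation form is then immediate.
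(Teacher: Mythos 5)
Your proposal is correct, and at its skeleton it is the same argument as the paper's: the paper's proof reduces the duality to the basis case $\OLA{\eta}^{(0)} = e_i$ (imported from the proof of Theorem~\ref{thm:gap-sequence-convergence}, where the reverse weights are by construction the coefficients expressing $\ORA{\Gamma}^{(t)}_i$ as a nonnegative linear combination of the entries of $\ORA{\Gamma}^{(0)}$, cf.~\eqref{eq:lincomb}) and then extends by linearity of expectation, which is exactly your bilinearity step. Where you genuinely diverge is that you \emph{prove} the basis-case identification rather than reading it off the construction of the reverse dynamics: you transpose the random product $C^{(t)} = \cF^{(t-1)}\cA^{(t-1)}\cdots\cF^{(0)}\cA^{(0)}$, check per step that $(\cF\cA)^{\top} = \cA^{\top}\cF^{\top}$ implements un-fold-then-un-average up to the scalar $3/8$, and correctly isolate the only non-routine verification --- that the $N_\tau$-reindexed transpose of folding is driven by split positions with inter-renewal law $\geom(1/2)+\geom(1/2)-1$, the shortening by one relative to the forward law $\geom(1/2)+\geom(1/2)$ coming from the deletion of one index per merge. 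This buys a self-contained justification of something the paper treats as definitional, at the cost of the extra bookkeeping. Two caveats you should make explicit when writing this up. First, the pathwise identity $(C^{(t)})^{\top} w = (3/8)^t\OLA{\eta}^{(t)}$ is too strong as literally stated, since the paper's reverse process is driven by fresh i.i.d.\ renewal processes $\rho^{(t)}$; what holds (and what you correctly retreat to) is equality in distribution, equivalently a pathwise identity under the specific coupling in which $\rho^{(t)}$ is constructed from the forward merge locations $\tau^{(t)}$ via the reindexing. Second, the transposed coefficient vector agrees with $\OLA{\eta}^{(t)}$ only up to a random re-centering induced by the $N_\tau$ bookkeeping and the forward averaging choices (the ``alignment of the $0$-indexed coordinate'' you mention); this shift is harmless in the expectation identity precisely because $\ORA{\Gamma}^{(0)}$ is stationary, and it is stationarity --- not the transpose algebra alone --- that absorbs it, so the duality as stated should be understood for stationary initial data, as in the paper.
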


\begin{theorem} \label{thm:cluster-size-convergence}
    Let $\ORA{G}^{(t)}$ be the number of points of $\ORA{\Xi}^{(t)}$ that have merged with the $0$--indexed point of $\Xi^{(0)}$.
    Then $$\left(\frac{3}{4}\right)^t\ORA{G}^{(t)} \xrightarrow{L^p} \ORA{G}^{(\infty)},$$ for $2 \leq p < \infty$.  Moreover, the distribution of $\ORA{G}^{(\infty)}$ has an exponentially decaying tail.
\end{theorem}

The following corollary follows by coupling the objects in the proofs of Theorems~\ref{thm:gap-sequence-convergence} and~\ref{thm:cluster-size-convergence}; but it is not a direct consequence of the theorem statements themselves.
\begin{corollary} \label{cor:joint-convergence}
    The joint weak convergence holds
    $$\left(\Theta\ORA{\Xi}^{(t)}, \left(\frac{3}{4}\right)^t\ORA{G}^{(t)}\right) \Rightarrow \left(\Theta\ORA{\Xi}^{(\infty)}, \ORA{G}^{(\infty)}\right).$$
\end{corollary}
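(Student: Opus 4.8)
The plan is to realize both limit statements on the single probability space $\Omega$ carrying the i.i.d.\ operator sequence $(\cA^{(r)},\cF^{(r)})_{r\ge 1}$ (equivalently, the averaging choices together with the renewal merge-times), and to exploit the fact that the convergence in Theorem~\ref{thm:cluster-size-convergence} is \emph{strong}. Write $A_t := \Theta\ORA{\Xi}^{(t)}$, $A := \Theta\ORA{\Xi}^{(\infty)}$, $B_t := (3/4)^t\ORA{G}^{(t)}$ and $B := \ORA{G}^{(\infty)}$, and let $\sF_s := \sigma(\cA^{(r)},\cF^{(r)} : 1\le r\le s)$. From the proof of Theorem~\ref{thm:cluster-size-convergence}, $B_t$ is a nonnegative $\sF_t$-martingale, bounded in $L^p$ for $p\ge 2$, so $B_t\to B$ in $L^p$ and hence in probability on $\Omega$. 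The subtlety the remark points to is that marginal convergence $A_t\Rightarrow A$ together with in-probability convergence $B_t\to B$ does \emph{not} determine the joint law of $(A_t,B_t)$ when the limit $B$ is genuinely random; this is precisely why one must re-enter the two proofs and couple them on $\Omega$.

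First I would reduce the joint statement to a statement about $A_t$ paired with a \emph{fixed} random variable. It suffices to prove $\E[f(A_t)\,g(B_t)]\to\E[f(A)\,g(B)]$ for every bounded continuous $g$ and every bounded continuous functional $f$ on the space of (Palm-shifted) configurations, as used in the weak form of Theorem~\ref{thm:gap-sequence-convergence}. Since $g$ is bounded and continuous and $B_t\to B$ in probability, $g(B_t)\to g(B)$ in $L^1$, so $\lvert\E[f(A_t)(g(B_t)-g(B))]\rvert\le \|f\|_\infty\,\E\lvert g(B_t)-g(B)\rvert\to 0$. Thus the claim is equivalent to
\[
\E[f(A_t)\,g(B)]\longrightarrow \E[f(A)\,g(B)].
\]
Next, using $L^p$-boundedness of the martingale, approximate $B$ by the $\sF_s$-measurable value $B_s$: for fixed $s$ the error $\E\lvert g(B)-g(B_s)\rvert$ is uniform in $t$ and vanishes as $s\to\infty$. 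Hence it is enough to show, for each fixed $s$, that $(A_t,B_s)\Rightarrow (A,B_s)$ as $t\to\infty$, and then to let $s\to\infty$.

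The crux is this last step, and here the coupling is essential. Conditionally on $\sF_s$, the configuration $A_t$ is produced by running the dynamics for a further $t-s$ steps, driven by the \emph{fresh, independent} operators $(\cA^{(r)},\cF^{(r)})_{r>s}$, started from the time-$s$ gap configuration $\ORA{\Gamma}^{(s)}$. I would therefore condition, writing $\E[f(A_t)\,g(B_s)] = \E\big[g(B_s)\,\E[f(A_t)\mid\sF_s]\big]$, and argue that the forgetting-of-initial-data mechanism underlying Theorem~\ref{thm:gap-sequence-convergence} forces $\E[f(A_t)\mid\sF_s]\to \E[f(A)]$ in $L^1(\Omega)$, with the limiting \emph{constant} not depending on the $\sF_s$-history. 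Granting this, $\E[f(A_t)\,g(B_s)]\to \E[f(A)]\,\E[g(B_s)]$, and letting $s\to\infty$ yields $\E[f(A_t)\,g(B)]\to\E[f(A)]\,\E[g(B)]$. In particular the limiting joint law factorizes as the product of the two marginals, so that $A$ and $B$ are independent and $(A_t,B_t)\Rightarrow(A,B)$, which is the asserted convergence. The independence is natural on timescale grounds: the martingale limit $B$ is determined by the early dynamics (it is well-approximated in $L^p$ by the $\sF_s$-measurable $B_s$), whereas the local gap field $A$ asymptotically forgets $\sF_s$.

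The main obstacle is precisely the conditional convergence $\E[f(A_t)\mid\sF_s]\to\E[f(A)]$, uniformly over the $\sF_s$-configuration. Theorem~\ref{thm:gap-sequence-convergence} is stated for a \emph{renewal} initial process with finite inter-renewal variance, but the conditional initial datum $\ORA{\Gamma}^{(s)}$ is stationary and \emph{not} renewal, since folding destroys independence of the gaps. The real work is thus to show that the convergence-to-a-universal-limit argument of Theorem~\ref{thm:gap-sequence-convergence} is robust enough to cover the conditional law of $\ORA{\Gamma}^{(s)}$ given $\sF_s$ — for instance, by coupling the dynamics started from $\ORA{\Gamma}^{(s)}$ with those started from a genuine finite-variance renewal process and showing the discrepancy is washed out by the averaging and folding over the remaining steps. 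Quantifying this loss of memory at the level of the dual process $\eta^{(t)}$ of Corollary~\ref{cor:duality}, which simultaneously encodes the local gaps and the origin-cluster mass, is where I expect the bulk of the effort to lie; once that robustness is in hand, the reductions above are routine.
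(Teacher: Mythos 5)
Your proposal takes a genuinely different route from the paper, but it has a gap at its center that is not a routine technicality, and it drives toward a conclusion the paper's construction contradicts. The paper's proof is a coupling in the \emph{dual} (time-reversed) picture: the proofs of Theorems~\ref{thm:gap-sequence-convergence} and~\ref{thm:cluster-size-convergence} each build a scaled weight process that is an $L^2$-convergent martingale driven by the un-merging renewal processes $\rho^{(t)}$, and the corollary follows by running both weight processes with the \emph{same} realization of the $\rho^{(t)}$'s, so the two martingales converge jointly (a.s.\ and in $L^2$) and the joint limit is inherited through duality. Your route instead stays in the forward picture and reduces the corollary to the conditional-forgetting statement $\E[f(A_t)\mid\sF_s]\to\E[f(A)]$. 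You correctly identify that this is the crux and that Theorem~\ref{thm:gap-sequence-convergence} does not cover it — its $L^2$ argument uses $\cov(\OLA{\Gamma}^{(0)}_i,\OLA{\Gamma}^{(0)}_j)=\sigma^2\ind_{i=j}$, i.e.\ i.i.d.\ gaps, while after even one folding step the gaps are correlated, so the time-$s$ configuration lies outside the theorem's hypotheses — but you then leave exactly this step unproven. Since that step is essentially a strengthening of the main theorem (universality for stationary non-renewal initial data, conditionally on the driving noise), the proposal is a reduction of the corollary to a harder open statement, not a proof.

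More seriously, if your reduction were carried out it would prove that $\Theta\ORA{\Xi}^{(\infty)}$ and $\ORA{G}^{(\infty)}$ are \emph{independent} (the limit law factorizes as $\E[f(A)]\,\E[g(B)]$), and this is in tension with the joint limit the paper actually constructs. Under the paper's coupling, both limits are martingale limits driven by the same indicators $\ind_{i\in\rho^{(t+1)}}$: the increments of the two coupled martingales are, conditionally, sums $\sum_{i,j}\OLA{X}_i\,\widetilde{X}_j\,\cov\bigl(\ind_{i\in\rho^{(t+1)}},\ind_{j\in\rho^{(t+1)}}\bigr)$ over weight sequences that are both started from $e_0$ and have overlapping supports at every step, so their correlation does not vanish in the limit. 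Conjecture~\ref{thm:weight-sum-convergence} makes the same point structurally: the limiting gap and the limiting cluster size appear as the total mass and the support length of a \emph{single} random distribution function $\OLA{F}^{(\infty)}$. Your heuristic that ``$B$ is determined early while $A$ forgets the early noise'' is precisely what the dual picture refutes — in that picture both limit objects are functionals of the same noise at all time scales. A minor additional inaccuracy: $(3/4)^t\ORA{G}^{(t)}$ is not an $\sF_t$-martingale in the forward filtration (one has $\E[\ORA{G}^{(t+1)}\mid\sF_t]=\ORA{G}^{(t)}+\tfrac14(\ORA{G}^{(t)}_{L}+\ORA{G}^{(t)}_{R})$, which is not $\tfrac43\ORA{G}^{(t)}$ pathwise); the martingale in the paper's proof is the dual weight-mass process. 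This does not break your reduction steps, which only need the $L^p$ convergence asserted in Theorem~\ref{thm:cluster-size-convergence}, but it signals that the forward-space framing is the wrong habitat for this corollary: the efficient (and the paper's) proof lives on the dual probability space, where joint convergence is automatic once the two weight processes share their $\rho^{(t)}$'s.
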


Let $\cG_+$ denote the space of non-negative integer-valued sequences indexed by $\dZ_+$. Suppose $\OLA{\eta}^{(0)} = e_0.$
Let $\OSLA{\eta}^{(t)}$ denote the $\cG_+$-valued random variable constructed from $\OLA{\eta}^{(t)}$ by removing all elements to the left of its left-most non-zero-valued element.
Denote by $\OLA{F}^{(t)}$ the random distribution function (corresponding to a finite measure supported on $(3/4)^t\cdot \dN$)
$$
 \OLA{F}^{(t)}(x) := \sum_{i = 0}^{\lfloor \left(\frac43 \right)^t x\rfloor}{\left(\frac38 \right)^t\OSLA{\eta}^{(t)}_i}. $$
Define
\begin{align*}
 \OLA{f}^{(t)}(x) := \OLA{F}^{(t)}(x) - \OLA{F}^{(t)}(x-(3/4)^t)
\end{align*}
to be the mass of $\OLA{F}^{(t)}(\cdot)$ at the point $x\in (3/4)^t\cdot \dN$.

\begin{theorem}\label{thm:weight-sum-convergence}
    The sequence of distribution functions $\OLA{F}^{(t)}$ converges weakly almost surely to $\OLA{F}^{(\infty)}$ as $t\to\infty$, where the (random) total mass of $\OLA{F}^{(\infty)}$ is distributed as $\ORA{\Xi}^{(\infty)}_1 - \ORA{\Xi}^{(\infty)}_0$, and the length of the (random) support of $\OLA{F}^{(\infty)}$ is distributed as $\ORA{G}^{(\infty)}_1$.
    
\end{theorem}

In Figure~\ref{fig:emp-distn}, we show simulations of $\OLA{F}^{(t)}$ for two independent copies of our dynamics. We observe convergence as $t$ increases. However, we cannot directly depict the convergence of the measures via their distribution functions.
\begin{figure}[htbp]
\centering
 \begin{subfigure}[t]{.45\columnwidth}
  \centering
  \includegraphics[width=\linewidth]{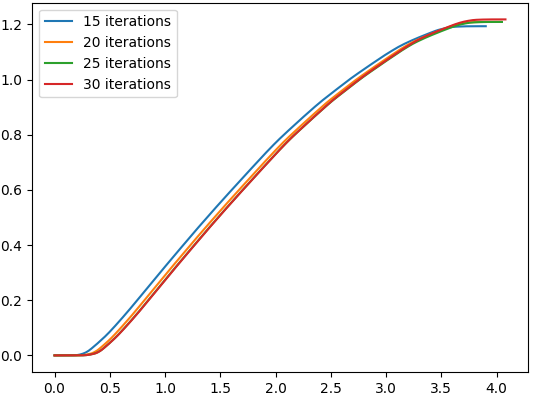}
  \caption{First Simulation}
  \label{subfig:s1}
 \end{subfigure}
 \begin{subfigure}[t]{.45\columnwidth}
  \centering
  \includegraphics[width=\linewidth]{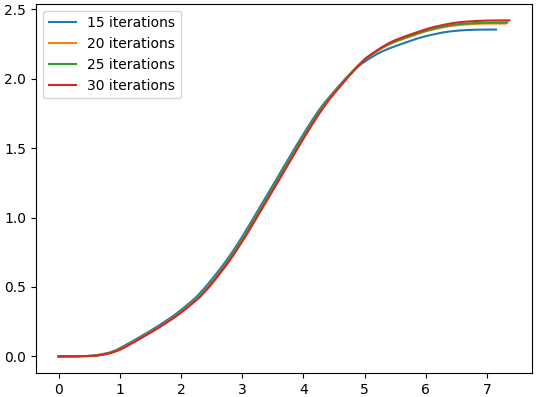}
  \caption{Second Simulation}
  \label{subfig:s2}
 \end{subfigure}
 \caption{Empirical distribution of $\OLA{F}^{(t)}$ across time for two different samples.}
 \label{fig:emp-distn}
\end{figure}

We explicitly write out a dynamics for $\OLA{F}^{(t)}$ in the following lemma that lies at the heart of creating a recursive structure for $\OLA{F}^{(t)}$.
\begin{lemma}\label{lem:rec}
    We have for all $t\geq 0, x\in\dR_+$,
 \begin{align*}
         & \OLA{F}^{(t)}(x) +
  \frac34 \sum_{n \leq (4/3)^t x}
  \OLA{f}^{(t)}((3/4)^t n) \cdot \bigl(\ind_{n \in \rho^{(t+1)}} - 1/3\bigr)                                                  \\
         & \qquad= \OLA{F}^{(t+1)}\bigl( x + (3/4)^{t+1}\cdot \bigl(| \rho^{(t+1)} \cap [0, (4/3)^t x]| - \frac13 (4/3)^t x \bigr)\bigr).
 \end{align*}
    For notational simplicity, we denote these dynamics as:
 \begin{align*}
  \OLA{F}^{(t)}(x) + \alpha^{(t+1)}_{x} = \OLA{F}^{(t+1)}(x + \beta^{(t+1)}_{x})
 \end{align*}
    where
 \begin{align*}
  \alpha^{(t+1)}_{x} & :=\frac34 \sum_{n \leq (4/3)^t x}
  \OLA{f}^{(t)}((3/4)^t n) \cdot \bigl(\ind_{n \in \rho^{(t+1)}} - 1/3\bigr),                              \\
  \beta^{(t+1)}_x    & := (3/4)^{t+1}\cdot \bigl(| \rho^{(t+1)} \cap [0, (4/3)^t x]| - \frac13 (4/3)^t x \bigr).
 \end{align*}
\end{lemma}

\subsection{Concentration for Stationary Renewal Processes}\label{sec:conc}

Here, we present several technical lemmas and definitions used to prove our main results. The proofs of these lemmas can be found in Section~\ref{sec:tech}.

Let $\rho$ be a stationary renewal process with gap distribution $Y\sim \geom(1/2) + \geom(1/2) - 1$ and let $N^{\star}_\rho(x)=\abs{\rho\cap [0,x]}, x\in\dN$ be the counting process. We have the following bound on the MGF of the excess count that will play a crucial role in the proof of Theorem~\ref{thm:weight-sum-convergence}. A more general result is given after this one.

\begin{lemma}\label{lem:renconc}
    For any $\theta_0>0$, there exists $A=A(\theta_{0})$ such that
 \begin{align*}
  \E \left[e^{-\theta(x/\mu - N^{\star}_\rho(x))}\right] &\leq  (1+A\abs{\theta})\cdot e^{A\theta^2x} \\
  \text{ and }
  \E \left[e^{-\theta(x/\mu - N^{\star}_\rho(x))}\cdot \ind_{x\in \rho}\right] &\leq \frac1\mu (1+A\abs{\theta})\cdot e^{A\theta^2x}
 \end{align*}
    for all $x\in \dN, \abs{\theta}\leq \theta_0$.
\end{lemma}

Let $Y$ be a random variable having the interoccurrence time distribution with pmf $(f_k)_{k\geq 1}$, mean $\mu=\E Y$ and mgf $\varphi(\theta):=\E e^{{\theta Y}}$ finite for some $\theta>0$. Let $Y_*$ denote a random variable with the equilibrium residual lifetime distribution with pmf
\begin{align*}
 \pr(Y^{\star}=k)=f^{\star}_k=\frac1\mu\pr(Y\geq k)\text{ for } k\geq 1.
\end{align*}
Let
\begin{align*}
    N_t   & :=\max\{k\geq 0\mid 0<Y_1+Y_2+\cdots+Y_k\leq t\},                \\
    N^{\star}_t & :=\max\{k\geq0\mid 0<Y^{\star}_1+Y_2+\cdots+Y_k\leq t\},\quad t=0,1,2,\ldots
\end{align*}
be the renewal counting process starting at $0$ and at equilibrium, respectively. We denote by $\rho$ and $\rho^{\star}$ the corresponding point processes. We assume that for all $\ga<0$,
\begin{align*}
    C_\ga:= \inf_{k\geq 1}\E(e^{\ga(Y-k)}\mid Y\geq k) >0 \text{ and }
    C^{\star}_\ga:= \inf_{k\geq 1}\E(e^{\ga(Y^{\star}-k)}\mid Y^{\star}\geq k) >0.
\end{align*}
For $\ga\geq 0$, we define $C_{\ga}=C^{\star}_{\ga}=1$.

\begin{lemma}\label{lem:conc}
    For $\theta\in\dR$, define the function $\psi(\theta):=\theta/\mu-\varphi^{-1}(e^{\theta})\geq 0$. Then, we have
 \begin{align*}
  \E e^{-\theta (N^\star_t -t/\mu) }
         & \leq \max\left(\frac{1}{C^{\star}_\ga},\frac{1}{C_\ga}\right)\cdot \E e^{\ga(Y^{\star}-1)} \cdot  e^{t \psi(\theta)} \\
  \text{ and }
  \E e^{- \theta (N^\star_t-t/\mu) }\ind_{{t\in \rho^{\star}}}
         & \leq \frac{1}{C_\ga}\cdot \frac1\mu  e^{t \psi(\theta)}
 \end{align*}
 \text{ for all } $t\geq0$ where
    $
  \ga := \varphi^{-1}(e^{\theta})
    $
    has the same sign as $\theta$.
\end{lemma}
\begin{rem}
    Using the fact that $\psi(0)=\varphi^{-1}(1)=0,\psi'(0)= 1/\mu-(\varphi^{-1})'(\varphi(0))=0, \psi''(0)= -(\varphi^{-1})''(1) = -1/\mu +\varphi''(0) \psi'(0)/\mu^2  = \var(Y)/\mu^3$, for $\abs{\theta}\ll1$ we have
    $
  \psi(\theta) \approx {\var(Y)}/{\mu^3}\cdot \theta^2/2.
    $
\end{rem}

The following lemma will be useful for our example.
\begin{lemma}\label{lem:cga}
    We have $\E e^{\ga(Y^{\star}-1)}=\frac{\E e^{\ga Y}-1}{ \mu(e^{\ga}-1)}$. Moreover, suppose that  $Y$ satisfies the following condition: the function $k\mapsto \pr(Y=k\mid Y\geq k)$ is increasing in $k$. Then $C_{\ga}=e^{-\ga}\E e^{\ga Y}$ for all $\ga<0$.
\end{lemma}

Finally, the following generalization of Lemma~\ref{lem:conc} will be used to derive exponential tail bounds for the limiting gap distribution.

\begin{lemma}\label{lem:concgen}
For $t\geq 1$ and $\mvv=(v_1,\dots,v_t)\in\dR^t$, define
$
X_t:=\sum_{k=1}^t v_k(\ind_{k\in\rho^{\star}}-1/\mu).
$
There exist constants $C,c\in(0,\infty)$,  such that for all $t\geq 1$, all $\mvv\in\dR^t$, and all $\gl\in\dR$ with
$
\abs{\gl }\cdot \norm{\mvv}_\infty < c,
$
we have
\begin{align*}
\E e^{\gl X_t}\leq e^{C\gl ^2\norm{\mvv}_2^2}.
\end{align*}
\end{lemma}

\subsection{Heuristics and a Toy Model}\label{sec:toy}
Here we consider a toy ``\emph{follow-the-leader}'' model in which only averaging occurs without any folding, and it is easy to see the asymptotic behavior of the dual weight sequence. There are infinitely many agents located at positions given by $(X_{i}(0))_{i\in\dZ}$ on the real line where $X_{i}(0)<X_{i+1}(0)$ for all $i\in\dZ$. Over time $t\in\dN$, their locations $(X_{i}(t))_{i\in\dZ}$ change in the following way.

At discrete time $t\geq 1$, a Rademacher random variable $\eps_{t}\sim \textrm{Uniform}\{-1,+1\}$ is chosen independently at random. If $\eps_{t}=1$, all points move halfway towards their right neighbor; if $\eps_{t}=-1$, all points move halfway towards their left neighbor to obtain their new positions at time $t$. Here, all points choose the same direction as the $0$-indexed point, rather than taking independent decisions as in our model (Algorithm 1). In particular, no collisions can occur, and the ordering of the points is preserved at all times. No space scaling is required to keep the intensity constant.

Let $G_{i}(t)=X_{i}(t)-X_{i-1}(t), i\in\dZ$ be the gap sequence vector between consecutive neighbors at time $t$. Note that the evolution of the gap sequence at time $t$ can be written as an action of the random linear operator $\frac12(T_0+T_{\eps_t})$ applied to the previous gap sequence $\mvG(t-1):=(G_{i}(t-1))_{i\in\dZ}$ where $T_{a}$ is the linear shift by $a$ operator given by $(T_{a}\mvx)_{i} =x_{i+a},\ i\in\dZ$. Moreover, the linear shift operators commute. Thus, we have
\begin{align*}
\mvG(t)= \prod_{i=1}^{t}\frac12(T_0+T_{\eps_i}) \cdot \mvG(0)= \frac1{2^t}\sum_{\gs_{1},\gs_{2},\ldots,\gs_{t}\in \{-1,+1\}} T_{\eps_1\gs_1 + \eps_2\gs_2+\cdots+\eps_t\gs_t} \cdot \mvG(0).
\end{align*}
Thus the gap sequence $G_{0}(t)$ can be written as the random linear combination $\sum_{k\in\dZ} a_t(k;\mveps_t)\cdot G_k(0)$ where $\mveps_{t}=(\eps_{1},\eps_{2},\ldots,\eps_{t})$ and the weights $a_t(i;\mvv)$ as a function of $\mvv=(v_{1},v_{2},\ldots,v_{t})\in\{-1,+1\}^{t}$ is given by
\begin{align*}
a_t(k;\mvv)
&= \pr(v_1\xi_1+v_2\xi_2+\cdots +v_t\xi_t = -k),\qquad \xi_1,\xi_{2},\ldots,\xi_{t}\stackrel{\textrm{i.i.d.}}{\sim} \textrm{Uniform}\{0,1\}\\
&= \pr\biggl(\sum_{i=1}^{t}v_i\cdot (1-2\xi_i)= 2k + v_{1}+v_{2}+\cdots + v_t\biggr).
\end{align*}
Clearly, the weights are zero for $\abs{k}>t$ and using the Central Limit Theorem we can easily see that the partial sum of the random weights $\sum_{i\leq k} a_t(i;\mveps_t)$ for $k\approx \theta \cdot \sqrt{t} - \frac12\sum_{i=1}^{t}\eps_{i},\ \theta\in\dR$ converges to Gaussian CDF $\Phi(2\theta)$.
Thus, the total mass of the weights is constant here, and we have a ``random'' limiting distribution for the appropriately scaled partial sum of the weight sequence given by a Normal distribution with a random shift arising out of $t^{-1/2}\sum_{i=1}^{t}\eps_{i}$.

%%******
\section{Proofs of Main Results}
%{Theorems~\ref{thm:gap-sequence-convergence},~\ref{thm:cluster-size-convergence} and Corollaries~\ref{cor:duality},~\ref{cor:joint-convergence}}
\label{sec:conv-profs}
%%******
%%******
\subsection{Proof of Theorem~\ref{thm:gap-sequence-convergence}}
Recall that $\OLA{\eta}^{(t)}$ is a sequence of \emph{weights}, from which we will construct (after a re-scaling by $\left(\frac38\right)^t$) the gap sequence of the limiting point process.
Note that $\OLA{\eta}^{(t)}$ is supported on a subset of $\{-2^t, -2^t+1, \ldots, 2^t\}$, so that $\sum_{i=-\infty}^{\infty}{\OLA{\eta}^{(t)}_i} < \infty.$
Define for $t \geq 0$ the process $$\OLA{M}^{(t)} := \left(\frac38\right)^t \sum_{i=-\infty}^{\infty}{\OLA{\eta}^{(t)}_i} < \infty.$$
We can express the evolution of $\OLA{M}^{(t)}$ as follows
\begin{align}
 \OLA{M}^{(t+1)} & = \left(\frac38\right)^{t+1}\left(2\norm{\OLA{\eta}^{(t)}}_1 + 2\sum_{i \in \rho^{(t+1)}}\OLA{\eta}^{(t)}_i\right)                                 \notag \\
                    & = \frac34 \left(\OLA{M}^{(t)} + \left(\frac38\right)^t\sum_{i \in \mathrm{supp}\OLA{\eta}^{(t)}}\OLA{\eta}^{(t)}_i\ind_{i \in \rho^{(t+1)}}\right) \notag\\
                    & = \OLA{M}^{(t)} + \frac34\sum_{i \in \mathrm{supp}\OLA{\eta}^{(t)}}\OLA{X}^{(t)}_i\left(\ind_{i \in \rho^{(t+1)}} -
 \frac13\right)
    =: \OLA{M}^{(t)} + \OLA{N}^{(t+1)}.\label{eq:mn}
\end{align}
Here, $\OLA{X}^{(t)}_{\cdot}:= \left(\frac{3}{8}\right)^t\OLA{\eta}^{(t)}_{\cdot}$.
Notice that $\OLA{N}^{(t+1)}$ has conditional mean $0$.
It follows that the process $\OLA{M}^{(t)}$ is a unit-mean positive martingale; hence, it converges to an a.s.~limit $\OLA{M}^{(\infty)}$.

Notice that since $\max_{i \in \dZ}\OLA{\eta}^{(t)}_i \leq 2^t$, we have that 
$$
\max_{i \in \dZ}\left(\frac{3}{8}\right)^t\OLA{\eta}^{(t)}_i \leq \left(\frac34\right)^t.
$$
Equivalently, we have $\max_{i \in \dZ}\OLA{X}_i^{(t)} \leq \left(\frac34\right)^t$.
Thus,
\begin{align}\label{eq:xtsq}
\sum_{i\in\dZ}\left(\OLA{X}_{i}^{(t)} \right)^2= \sum_{i\in\dZ}\left(\left(\frac{3}{8}\right)^t\OLA{\eta}^{(t)}_i\right)^2 \leq \left(\frac34\right)^t\OLA{M}^{(t)},
\end{align}
 which converges a.s.~to $0$ at an exponential rate due to the a.s. convergence of $\OLA{M}^{(t)}$ to $\OLA{M}^{(\infty)}$.

We will now show the $L^2$ convergence of $\OLA{M}^{(t)}$ to $\OLA{M}^{(\infty)}$.
Indeed, notice that
\begin{align*}
 \E\left[\left(\OLA{M}^{(t+1)}\right)^2 \ \biggl|\ \cF^{(t)}\right]
     & = \left(\OLA{M}^{(t)}\right)^2 + \E\left[\left(\OLA{N}^{(t+1)}\right)^2 \ \biggl|\ \cF^{(t)}\right],
\end{align*}
since $\OLA{N}^{(t+1)}$ has conditional mean $0$.
Here, the $\sigma$-algebra $\cF^{(t)}$ contains only those events related to the dynamics of $\OLA{\eta}^{(t)}$: specifically, it does not contain events related to $\Gamma^{(0)}$.
Substituting, we have
\begin{align*}
 \E\left[\left(\OLA{N}^{(t+1)}\right)^2 \ \biggl|\ \cF^{(t)}\right]
     & \leq \left(\frac34\right)^2 \E\sum_{i \in \mathrm{supp}{\OLA{\eta}^{(t)}}}\sum_{j \in \mathrm{supp}{\OLA{\eta}^{(t)}}}\OLA{X}_i^{(t)}\OLA{X}_j^{(t)}\left(\ind_{i \in \rho^{(t+1)}} - \frac13\right)\left(\ind_{j \in \rho^{(t+1)}}-\frac13\right) \\
     & \leq \sum_{i \in \mathrm{supp}{\OLA{\eta}^{(t)}}}\sum_{j \in \mathrm{supp}{\OLA{\eta}^{(t)}}}\left(\frac34\right)^{2t}\abs{ \cov(\ind_{i \in \rho^{(t+1)}}, \ind_{j \in \rho^{(t+1)}})} \\
     & \leq \left(\frac34\right)^{2t} \sum_{i\geq 1}i\cdot \abs{\cov\left(\ind_{0 \in \rho^{(t+1)}}, \ind_{i \in \rho^{(t+1)}}\right)}\to 0.
\end{align*}
Here, the convergence occurs using the fact that the renewal distribution of $\rho$ has an exponential tail and thus, $\cov(\ind_{i \in \rho^{(t)}}, \ind_{j \in \rho^{(t)}})$ decays exponentially in $|j-i|$.
To see this, let $Y \sim \mathrm{Geom}(1/2) + \mathrm{Geom}(1/2) - 1$ denote the gap distribution of $\rho$ and let $f_Y(y) := \pr(Y = y)$.
Recall that $\mu := \E Y = 3$.
Let $u_0 = 1$ and $u_i$ denote the probability of a renewal at time $i$ given that there is a renewal at time $0$.
Then $u_n = \sum_{i=0}^{n-1}u_i f_Y(n-i)$.
We have $\cov(\ind_{i \in \rho^{(t)}}, \ind_{j \in \rho^{(t)}}) = \frac1\mu(u_{|j-i|} - \frac1\mu)$.
From here, it is easily seen that $\sum_{i\geq1}i\cdot \abs{\cov(\ind_{0 \in \rho^{(t+1)}}, \ind_{i \in \rho^{(t+1)}})}$ is summable.
It follows that
\begin{align*}
 \sup_{t \geq 0} \E \left(\OLA{M}^{(t)}\right)^2 < \infty,
\end{align*}
which also establishes the $L^2$ convergence of $\OLA{M}^{(t)}$ to a finite unit-mean limit.

\begin{figure}[htbp]
 \centering
 \includegraphics[width=\linewidth]{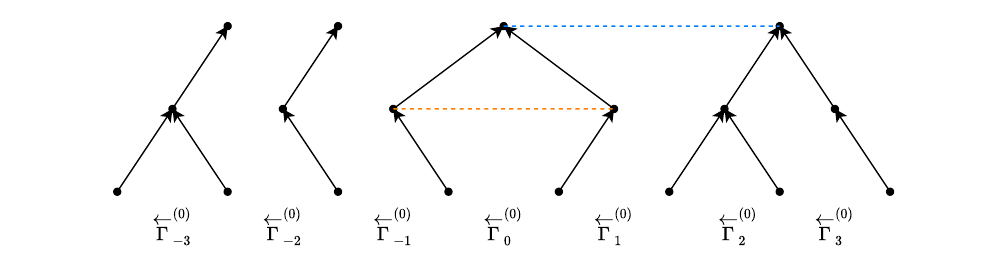}
 \caption{Relationship between the weight sequences $\eta^{(t)}$ and the initial gap sequence $\Gamma^{(0)}$. The orange dashed line, for example, is given by $\frac12\OLA{\Gamma}^{(0)}_{-1} + \OLA{\Gamma}^{(0)}_0 + \frac12\OLA{\Gamma}^{(0)}_1$; the blue dashed line is given by $\frac12\OLA{\Gamma}^{(0)}_0 + \OLA{\Gamma}^{(0)}_1 + \OLA{\Gamma}^{(0)}_2$.}
 \label{fig:inner-prod}
\end{figure}

Now, if the gap distribution of $\Xi_0$ has finite variance $\sigma^2$, we have the following (see Figure~\ref{fig:inner-prod}):
\begin{align*}
 \left\langle \left(\frac{3}{8}\right)^t\OLA{\eta}^{(t)}, \OLA{\Gamma}^{(0)}\right\rangle := \sum_{i \in \dZ}\left(\frac{3}{8}\right)^t\OLA{\eta}^{(t)}_i\OLA{\Gamma}^{(0)}_i
    &= \OLA{M}^{(t)} + \sum_{i \in \dZ}\left(\frac{3}{8}\right)^t\OLA{\eta}^{(t)}_i(\OLA{\Gamma}^{(0)}_i - 1)\\ & =: \OLA{M}^{(t)} + \OLA{L}^{(t)}.
\end{align*}

Obviously, we have $\E \OLA{L}^{(t)} = 0$ for all $t$, since $\E \Gamma^{(0)}_i = 1$ for all $i \in \dZ$.
Noting that $(\OLA{\Gamma}_i^{(0)})_{i \in \dZ}$ are i.i.d.~with variance $\sigma^2$, we have
\begin{align*}
 \E \left[\left(\OLA{L}^{(t+1)}\right)^2 \ \biggl\vert\ \cF^{(t)}\right] & = \sum_{i \in \dZ}\sum_{j \in \dZ}\left(\frac38\right)^t\OLA{\eta}_i^{(t)}\left(\frac38\right)^t\OLA{\eta}_j^{(t)}
 \cdot \cov(\OLA{\Gamma}_i^{(0)}, \OLA{\Gamma}_j^{(0)})\\
    & = \sum_{i \in \dZ}\left(\left(\frac38\right)^t\OLA{\eta}_i^{(t)}\right)^2 \sigma^2 \leq \left(\frac34\right)^t\OLA{M}^{(t)} \sigma^2 \xrightarrow{a.s.} 0;
\end{align*}
and thus $\OLA{L}^{(t)} \to 0$ in $L^2$.
It follows that $\langle \left(\frac{3}{8}\right)^t\OLA{\eta}^{(t)}, \OLA{\Gamma}^{(0)}\rangle \to \OLA{M}^{(\infty)}$ in $L^2$.
This establishes the one-dimensional distribution convergence since the choice of $\OLA{\eta}^{(0)} = e_0$ is without loss of generality.

We now extend the argument to finite-dimensional distributions. Let $F\subset \dZ$ be finite and take
\begin{align*}
\OLA{\eta}^{(0)}=\sum_{i\in F} e_i.
\end{align*}
Using the same renewal environment $(\rho^{(t)})_{t\ge 1}$, the corresponding reverse-time weight processes form a finite collection of martingales, each of which satisfies the same a.s.\ and $L^2$ convergence bounds as in the case $\OLA{\eta}^{(0)}=e_0$ (by shift-invariance of the dynamics and the previous argument). Therefore, any finite linear combination of the associated dual pairings converges in $L^2$, and hence in distribution. This yields convergence of all finite-dimensional marginals.

Finally, we show that the limiting gap distribution has exponential tails.
Notice that the limiting gap distribution is given by $\OLA{M}^{(\infty)}$, so it suffices to obtain a bound via the dynamics on $\OLA{M}$. We will prove that for some $\gl>0$, $\E e^{\gl \OLA{M}^{(\infty)}} <\infty$.

Note that the random variables $\OLA{M}_t$ are bounded for all $t\in\{1,2,\ldots\}$ and thus have finite MGFs. Using the decomposition $\OLA{M}_{t+1}-\OLA{M}_t = \OLA{N}^{(t+1)}$ from equation~\eqref{eq:mn}, the MGF bound from Lemma~\ref{lem:concgen}, and the bound from equation~\eqref{eq:xtsq}, we have for some constants $c,C>0$ and all $\abs{\gl}\leq c, t\geq 0$
\begin{align*}
\E (e^{\gl(\OLA{M}_{t+1}-\OLA{M}_t)}\mid \cF_t) \le\exp(C\cdot \gl^2\cdot (3/4)^t\OLA{M}_t).
\end{align*}
Thus, we have for $0<\gl\leq c, t\geq 0$,
\begin{align*}
\E e^{\gl \OLA{M}_{t+1}}\leq \E e^{\gl (1+C(3/4)^t\gl)\OLA{M}_t}.
\end{align*}
Now fix $\gl_{0}\in [0,c]$ and recursively define $\gl_{t+1}$ as a function of $\gl_{t}$ by
$$\gl_t=\gl_{t+1}\cdot (1+C\gl_{t+1}\cdot (3/4)^t), t\geq 0.$$ We have
\begin{align*}
\E e^{\gl _{t+1}\OLA{M}_{t+1}}\leq \E e^{\gl _{t}\OLA{M}_{t}} \text{ for all } t\geq 0.
\end{align*}
Clearly, $\gl_{t}$ is decreasing and converges to a constant $\gl_\infty\geq \gl_{0}\prod_{t=0}^{\infty}(1+C\gl_{0}(3/4)^{t})^{-1}>0$. By Fatou's lemma, we obtain a finite MGF bound for $M_\infty$, completing the proof.\qed

%%******
%%******
\subsection{Proof of Theorem~\ref{thm:cluster-size-convergence}}
The proof follows the same martingale argument as in Theorem~\ref{thm:gap-sequence-convergence}, with only a modification of the reverse-time update rule to encode cluster-size growth rather than gap mass.
Specifically, in the reverse-time construction, instead of using the tuples $(\OLA{\eta_i}^{(t)}, \OLA{\eta_i}^{(t)})$ and $(\OLA{\eta_i}^{(t)}, 2\OLA{\eta_i}^{(t)}, \OLA{\eta_i}^{(t)})$,
we instead use the tuples
$(\OLA{\eta_i}^{(t)}, \OLA{\eta_i}^{(t)})$ and $(0, 2\OLA{\eta_i}^{(t)}, 2\OLA{\eta_i}^{(t)})$. All subsequent steps (martingale decomposition, $L^2$ bounds, and convergence) are unchanged, since they depend only on the same renewal-environment covariance estimates and the same scaling structure. The exponential tail decay also follows the same argument as used in the proof of Theorem~\ref{thm:gap-sequence-convergence}.\qed
%%******
%%******
\subsection{Proof of Corollary~\ref{cor:duality}}
It suffices to verify the duality identity for initial conditions of the form $\OLA{\eta}^{(0)}=e_i$, $i\in \dZ$, since both sides are linear in $\OLA{\eta}^{(0)}$ (for finitely supported initial configurations), and the general statement follows by linearity of expectation.
Recall that, for $h(\cdot, \cdot)$ defined as the inner product in the corollary statement, the Markov processes $\ORA{\Gamma}^{(\cdot)}$ and $\OLA{\eta}^{(\cdot)}$ are dual if for all $t$, $\ORA{\Gamma}^{(0)}$, and $\OLA{\eta}^{(0)}$, we have
$$
\E_{\ORA{\Gamma}^{(0)}}h( \ORA{\Gamma}^{(t)}, \OLA{\eta}^{(0)}) = \E_{\OLA{\eta}^{(0)}}h(\ORA{\Gamma}^{(0)}, \OLA{\eta}^{(t)}).
$$
For $\OLA{\eta}^{(0)} = e_i$, the result is shown in the proof of Theorem~\ref{thm:gap-sequence-convergence}.\qed

%%******
%%******
\subsection{Proof of Corollary~\ref{cor:joint-convergence}}
This result follows by taking the proofs of Theorems~\ref{thm:gap-sequence-convergence} and~\ref{thm:cluster-size-convergence}, and coupling the point processes used to determine the un-merging indices.\qed

%%******
%%******

\subsection{Proof of Theorem~\ref{thm:weight-sum-convergence}}
We prove convergence of $\OLA{F}^{(t)}$ by first establishing control of the moment generating functions $G^{(t)}$, and then identifying the limiting distribution function through the recursive dynamics in Lemma~\ref{lem:rec}.
Indeed, define for $s\in\dR$,
$$
    G^{(t)}(s) := \int_{\dR}e^{s x}d\OLA{F}^{(t)}(x).
$$
Using Lemma~\ref{lem:rec}, we get:
\begin{align*}
    G^{(t+1)}(s) 
    & = \int_{\dR}e^{s x}d\OLA{F}^{(t+1)}(x)
 \\
    & = \int_{\dR}e^{s (x + \beta^{(t+1)}_{x})}d\OLA{F}^{(t+1)}(x+ \beta^{(t+1)}_{x})
 \\
    & = \int_{\dR}e^{sx +s \beta^{(t+1)}_{x}}d\OLA{F}^{(t)}(x) + \int_{\dR}e^{sx +s \beta^{(t+1)}_{x}}d\alpha^{(t+1)}_{x}\\
    &=: H_\beta^{(t+1)}(s) + H_\alpha^{(t+1)}(s),
\end{align*}
where we treat $\alpha^{(t)}_x$ as a signed measure.
Here, the second equality follows because the mapping $x \mapsto x + \beta^{(t+1)}_{x}$ is monotone increasing in $x$: we have $x + \beta^{(t+1)}_{x} = \frac{2}{3}x + N_{\rho^{(t+1)}}(x)$, where $N_{\rho^{(t+1)}}(\cdot)$ accounts for any positive jumps due to $\rho^{(t+1)}$.

Observe that for $x \in \dR$,
\begin{align*}
 \alpha_x^{(t+1)}
     & = -\frac14 \OLA{F}^{(t)}(x) + \frac34\sum_{n \leq (4/3)^t x} \OLA{f}^{(t)}((3/4)^t n)
 \cdot \ind_{n \in \rho^{(t+1)}},
\end{align*}
and that
\begin{align*}
    H_\beta^{(t+1)}(s) &:= \int_{\dR}e^{sx + s\beta_x^{(t+1)}}d\OLA{F}^{(t)}(x)
    = \sum_{n \in \dN}\exp\left(\left(\frac34\right)^tsn + s \beta^{(t+1)}_{(3/4)^tn}\right)\OLA{f}^{(t)}\left(\left(\frac34\right)^t n\right),
\end{align*}
and thus we can re-express:
\begin{align*}
    G^{(t+1)}(s) & = \frac34 \sum_{n \in \dN}\exp\left(\left(\frac34\right)^tsn + s \beta^{(t+1)}_{(3/4)^tn}\right)\OLA{f}^{(t)}\left(\left(\frac34\right)^t n\right)
 \\
    &\qquad + \frac34 \sum_{n \in \dN}\exp\left(\left(\frac34\right)^tsn + s \beta^{(t+1)}_{(3/4)^tn}\right)\OLA{f}^{(t)}\left(\left(\frac34\right)^t n\right)\ind_{n \in \rho^{(t+1)}}.
\end{align*}
For notational cleanliness, let $\hat{n} := \left(3/4\right)^t n$. 
Conditioning on $\cF_t$, the terms $\beta^{(t+1)}_{\hat n}$ are functions of the renewal process $\rho^{(t+1)}$ only, so Lemma~\ref{lem:renconc} applies termwise to the exponential factors involving $\beta^{(t+1)}_{\hat n}$.
Using Lemma~\ref{lem:renconc}, we have
\begin{align*}
 \E\bigl( G^{(t+1)}(s)\mid \cF_t\bigr) & = \frac34 \sum_{n \in \dN}\E\biggl(\exp\left(s\hat{n} + s \beta^{(t+1)}_{\hat{n}}\right) \mid \cF_t\biggr)\OLA{f}^{(t)}\left(\hat{n}\right)
 \\
    &\qquad + \frac34 \sum_{n \in \dN}\E\biggl(\exp\left(s\hat{n} + s \beta^{(t+1)}_{\hat{n}}\right)\ind_{n \in \rho^{(t+1)}}\mid \cF_t\biggr)\cdot \OLA{f}^{(t)}\left(\hat{n}\right) .
 \\
    &\leq \frac34 \sum_{n \in \dN}e^{s\hat{n}}\OLA{f}^{(t)}\left(\hat{n}\right)\left(1 + \left(\frac34\right)^{t}As\right)e^{As\hat{n}}
 \\
    &\qquad + \frac34 \sum_{n \in \dN}\frac{1}{3}e^{s\hat{n}}\OLA{f}^{(t)}\left(\hat{n}\right)\left(1 + \left(\frac34\right)^{t}As\right)e^{As\hat{n}},
\end{align*}
where the last inequality holds for all sufficiently small $s > 0$.
Finally, using Lemma~\ref{lem:rec}, we get
\begin{equation}
 \E\bigl( G^{(t+1)}(s)\mid \cF_t\bigr) \leq (1+A(3/4)^t s)\cdot G^{(t)}(s(1+A(3/4)^ts)).
 \label{eq:MGF-bound}
\end{equation}

Now take $\theta_0\in(-1/A,\infty)$. Consider the recursion
\begin{align*}
 \theta_t = \theta_{t+1}(1+A(3/4)^t \theta_{t+1}),\qquad t\geq 0.
\end{align*}
Clearly, $\theta_t$ is decreasing in $t$ and is lower bounded by $\theta_0\prod_{i=0}^\infty (1+A(3/4)^i\theta_0)^{-1}.$ Thus $\theta_*:=\lim_{t\to\infty} \theta_t$ exists. By continuity, the map $\theta_0\mapsto \theta_*$ is a bijection.
By construction of $(\theta_t)$ and the conditional estimate~\eqref{eq:MGF-bound}, the adapted process
\begin{align*}
\bigl(\theta_t\,G^{(t)}(\theta_t)\bigr)_{t\ge 0}
\end{align*}
is a nonnegative supermartingale. Hence, it converges almost surely to a finite limit, which we denote by
\begin{align*}
G^{(\infty)}(\theta_*) := \lim_{t\to\infty} G^{(t)}(\theta_t).
\end{align*}

To identify the limit, we use the previously established convergence of the scaled gap distributions together with the a.s.\ control on total mass and support length encoded by the reverse-time construction. In particular, on an event of probability one, the family $\{\OLA{F}^{(t)}\}_{t\ge 0}$ is tight and has uniformly bounded total mass and support along large times. Consequently, for any sequence $t_k\to\infty$, there exists a further subsequence (not relabeled) such that $\OLA{F}^{(t_k)}$ converges weakly to some random limit $\widetilde{F}^{(\infty)}$.

By Theorem~\ref{thm:gap-sequence-convergence}, the limiting gap law is unique; therefore, the total mass of $\widetilde{F}^{(\infty)}$ must agree in distribution with that of $\OLA{F}^{(\infty)}$, and the recursive construction forces the same identification for the full distribution function. Hence, every subsequential weak limit coincides (almost surely) with $\OLA{F}^{(\infty)}$. It follows that the full sequence $\OLA{F}^{(t)}$ converges weakly almost surely to $\OLA{F}^{(\infty)}$. \qed
%
%Recall that the (random) support and mass of the limiting gap converge a.s. As a result, we have (on a set of full probability) the pointwise convergence of $(G^{(t)}(\theta_t))(\omega) \to (G^{(\infty)}(\theta_*))(\omega)$. It remains to show that (on a set of full probability) in a neighborhood of $0$, we have $(G^{(t)}(\theta))(\omega) \to (G^{(\infty)}(\theta))(\omega)$.
%
%Indeed, recall that the gap distributions converge weakly over time to a unique limit, which is a.s. compactly supported and has a.s. finite mass, and so there are also uniform bounds on the support and mass in the pre-limit. Thus, we have the existence of a subsequence along which $(G^{(t)}(\theta_*))(\omega)$ converges to some a.s. limit $(\tilde{G}^{(\infty)}(\theta_*))(\omega)$. By the uniqueness of the limit in Theorem~\ref{thm:gap-sequence-convergence}, we must have that $\tilde{G}^{(\infty)}(\cdot) = G^{(\infty)}(\cdot)$. Thus, every subsequence has a further subsequence which converges a.s., and we have $G^{(t)}(\theta) \Rightarrow G^{(\infty)}(\theta)$, as desired.

%%******
\section{Proofs of Technical Results}\label{sec:tech}

\subsection{Proof of Lemma~\ref{lem:rec}}
Fix $t$ and consider some fixed integer $n \in \left[0, \left(4/3\right)^t\right]$.
There is mass $\OLA{f}^{(t)}((3/4)^t n)$ at $n$. By rescaling by a factor of $(\frac43)^t$, any new mass has the same units over time.
If there is an un-merging at the $n$-th index, then an additional $\OLA{f}^{(t)}((3/4)^t n)$ is created, and no new mass is created otherwise.
However, any new mass is also shrunk by a factor of $\frac34$ at time step $t+1$.
Thus, the contribution of the mass at index $n$ at time $(t+1)$ is given by $\left(\frac34\right)^{t+1}\cdot \OLA{f}^{(t)}((3/4)^t n)\cdot (\ind_{n \in \OLA{\rho}^{(t+1)}} - 1/3)$.
The total additional mass at time $(t+1)$ contributed from indices contributing to $\OLA{F}^{(t)}(x)$ is then given by summing over $n$.

By a similar re-scaling argument, there are $|\OLA{\rho}^{t+1} \cap [0, (4/3)^tx]|$ un-mergings in this interval, and once again, those are scaled by an additional factor of $\frac34$ at time $t+1$.

The right-hand side accounts for the contribution of new mass within the interval $[0, (4/3)^t x]$ at time $t$ to the distribution function $F^{(t+1)}(\cdot)$.
We need to account for the extra mass created at time $t$; this results in a change to the argument of $F^{(t+1)}(\cdot)$.
To do this, we count the number of gaps that occur in the interval $[0, (4/3)^t x]$.
This increases by one at each point of $\OLA{\rho}^{t+1} \cap [0, (\frac43)^tx]$, but again any new mass is shrunken by a factor of $\frac34$ at time step $t+1$.
Thus, the total number of gaps, after re-scaling, is given by $x + \beta^{(t+1)}_x$.
The result follows.\qed

\subsection{Proof of Lemma~\ref{lem:renconc}}
We will use Lemma~\ref{lem:conc}.
Let $Y\sim \geom(1/2) + \geom(1/2) - 1$ be the renewal gap distribution with pmf
$
    f_k=k\cdot2^{-k-1} \text{ for } k\in\dN,
$
mean $\mu=3$, and mgf
\begin{align*}
 \varphi(\theta)=\E e^{\theta Y} =  e^\theta (2-e^\theta )^{-2} \geq e^{\mu\theta} \text{ for } \theta<\ln2.
\end{align*}
Let $Y_*$ denote a random variable with the equilibrium residual lifetime distribution with pmf
\begin{align*}
    f^{\star}_k=\frac1\mu\pr(Y\geq k)=\frac13(k+1)\cdot2^{-k}\text{ for } k\geq 1.
\end{align*}
It is easy to check that both $Y$ and $Y^{\star}$ satisfy the condition in Lemma~\ref{lem:cga}. Thus, we have $C_{\ga}=\ind_{\ga\geq 0} + e^{\theta-\ga}\ind_{\ga<0}$ when $\ga=\varphi^{-1}(e^{\theta})\leq \theta/\mu$.  Similarly, $C^{\star}_{\ga}=\ind_{\ga\geq 0} + \E e^{\ga(Y^{\star}-1)}\ind_{\ga<0}$.

Next, we solve for $\varphi(x)=e^{\theta}$, to get
\begin{align*}
    e^{2x}-(4+e^{-\theta})e^x+4 = 0 \text{ or } e^x = \frac{4+e^{-\theta}- \sqrt{e^{-\theta}(e^{-\theta}+8)}}{2}.
\end{align*}
Thus, for $\theta\in\dR$, we have
\begin{align*}
 \psi(\theta)
     & = \theta/3 - \varphi^{-1}(e^{\theta})\\
     & = \theta/3-\log\left(1- \frac{2(e^{-\theta}-1)}{2+e^{-\theta}+ \sqrt{e^{-\theta}(e^{-\theta}+8)}} \right) \\
     & \leq \theta/3-\log\left(1- \frac{e^{-\theta}-1}{2+e^{-\theta}} \right)
 \leq \log \left(\frac23e^{\theta/3}+\frac13e^{-2\theta/3}\right)
 \leq \frac18\theta^2.
\end{align*}
Using Lemma~\ref{lem:conc}, we get
\begin{align*}
 \E e^{\theta(x/\mu- N_\rho^\star(x))} &\leq (1+A\abs{\theta})\cdot   e^{x\theta^2/8}\\
 \text{ and }
 \E e^{\theta(x/\mu- N_\rho^\star(x))} \ind_{{x\in \rho}}&\leq e^{\max(0,-\theta)}\cdot \frac1\mu e^{x\theta^2/8}
\end{align*}
$\text{ for all } x\geq1, \theta\in\dR.$
This proves the lemma.\qed

\subsection{Proof of Lemma~\ref{lem:conc}}
Fix $\theta\in \dR$, and define
$
    m_t:=\E e^{-\theta N_t}, m^{\star}_t:=\E e^{-\theta N^{\star}_t} \text{ for } t\geq 0.
$
We have
\begin{align*}
    m_t = \pr(Y>t)+ e^{-\theta}\sum_{k=1}^t f_km_{t-k}.
\end{align*}
Define
\begin{align*}
 \ga:=\varphi^{-1}(e^{\theta})\leq \theta/\mu
\end{align*}
so that $\varphi(\ga)=e^{\theta}$. The last inequality follows from Jensen's inequality. Note that $\ga>0$ for $\theta>0$ and $\ga<0$ for $\theta<0$.
Define $\olm_t:= e^{\ga t}m_t.$
Let $f^{(\ga)}$ be the pmf given by $f^{(\ga)}_k:=e^{\ga k -\theta}f_k, k\geq 1$ and $Y^{(\ga)}$ be a random variable with pmf $f^{(\ga)}$.
We get
\begin{align*}
 \olm_t = \mu e^{\ga t}f_{t+1}^{\star}+ \sum_{k=1}^t f^{(\ga)}_k \cdot \olm_{t-k}.
\end{align*}
Let
\begin{align*}
    A(\ga):=\sum_{t=0}^{\infty} e^{\ga t}f_{t+1}^{\star} = e^{-\ga}\E e^{{\ga Y^{\star}}}
\end{align*}
and $Z_{\ga}\equald Y^{*,(\ga)}-1$ be a random variable with pmf $e^{\ga t}f_{t+1}^{\star} /A(\ga)$, $t\geq 0$.
Thus, we have
\begin{align*}
 \olm_t
     & = \mu A(\ga)\cdot \pr(Y^{*,(\ga)}+Y_1^{(\ga)}+Y_2^{(\ga)}+\cdots+Y_k^{(\ga)}=t+1\text{ for some } k\geq 0 )                                            \\
     & \leq \frac{1}{C_{\ga}}\cdot e^{\theta-\ga}\cdot \E Y^{(\ga)} \cdot \pr(Y^{(\ga),*}+Y_1^{(\ga)}+Y_2^{(\ga)}+\cdots+Y_k^{(\ga)}=t+1\text{ for some } k\geq 0 ) \\
     & \leq \frac{1}{C_{\ga}}\cdot e^{\theta-\ga}.
\end{align*}
Here, we used the fact that for all $k\geq 1$,
\begin{align*}
 \frac{\mu  A(\ga) \cdot \pr(Y^{*,(\ga)}= k)}{\E Y^{(\ga)}\cdot \pr(Y^{(\ga),*}= k) }
    = \frac{e^{\theta-\ga} \pr(Y\geq k)}{\sum_{t\geq k} e^{\ga (t-k)} f_t} \leq e^{\theta-\ga} \cdot \frac{1}{C_\ga}.
\end{align*}
Similarly, for the equilibrium renewal process, we get
$
    m^{\star}_t  = \pr(Y^{\star}> t) + e^{-\theta}\sum_{k=1}^t f^{\star}_k m_{t-k} $, or
\begin{align*}
    e^{\ga t} m^{\star}_t
     & = e^{\ga t}\pr(Y^{\star}> t) +  e^{-\theta}\sum_{k=1}^t e^{\ga k}f^{\star}_k\cdot \olm_{t-k}\\
     & \leq e^{-\ga}\cdot e^{\ga (t+1)}\pr(Y^{\star}\geq t+1) + \frac{1}{C_\ga}\cdot e^{-\ga} \sum_{k=1}^t e^{\ga k}f^{\star}_k \\
     &\leq \max\left(\frac{1}{C^{\star}_\ga},\frac{1}{C_\ga}\right)\cdot \E e^{\ga(Y^{\star}-1)}.
\end{align*}
For the second part, define
\begin{align*}
    n_{t}:=\mu e^{\ga t}\E( e^{\theta N^\star_t}\cdot \ind_{t\in \rho^{\star}}), t\geq 1,
\end{align*}
which satisfies the recursion
\begin{align*}
    n_{t} & = \mu e^{-\theta}e^{\ga t}f_{t}^{\star} + \sum_{k=1}^{t} f^{(\ga)}_{k}\cdot n_{t-k}.
\end{align*}
This is bounded by $1/C_\ga$ by the same argument used in the first part. Simplifying, we get the result.\qed

\subsection{Proof of Lemma~\ref{lem:cga}}
The first equality follows from the fact that $\pr(Y^{\star}=k)=\frac1\mu \pr(Y\geq k), k\geq 1$ and by changing the order of the sums. For the second result, we will show that $\E(e^{\ga(Y-k)}\mid Y\geq k)$ is increasing in $k\geq 1$, when $\ga<0$. Then the result follows by taking $k=1$. Finally, we have
\begin{align}\label{eq:prob}
     e^{\ga (k+1)}\pr(Y\geq k)\pr(Y\geq k+1)\cdot &\left( \E(e^{\ga(Y-k-1)}\mid Y\geq k+1) - \E(e^{\ga(Y-k)}\mid Y\geq k) \right) \\
     & = \sum_{i=k}^{\infty} f_i\cdot \sum_{j=k+1}^{\infty} e^{\ga j}f_j
    -
    e^{\ga} \sum_{i=k+1}^{\infty} f_i \cdot \sum_{j=k}^{\infty} e^{\ga j}f_j.\notag 
\end{align}
Simplifying, we get    
\begin{align*}
     \eqref{eq:prob}& = (1-e^{\ga}) \cdot \sum_{i=k}^{\infty}f_i \cdot \sum_{j=k}^{\infty} e^{\ga j} f_j
    - f_{k}\sum_{i=k}^{\infty} f_i (1-e^\ga)\sum_{j=k}^{i}e^{\ga j}                                                      \\
     & = (1-e^{\ga})\cdot \sum_{j=k}^{\infty} e^{\ga j} \left(f_j \pr(Y\geq k) - f_k \pr(Y\geq j)\right) >0.
\end{align*}
This completes the proof.\qed

\subsection{Proof of Lemma~\ref{lem:concgen}}
Define $\xi_k:=\ind_{k\in\rho^{\star}}-1/\mu,$ so that $(\xi_k)_{k\in\dZ}$ is stationary, centered, and bounded by $|\xi_k|\leq 1$. Recall that $Y_*$, the equilibrium residual lifetime distribution, satisfies $\pr(Y_{*}\geq k)\leq c_0 e^{-\theta k}$, $k\geq 1$, for some $\theta>0$. For completeness, we present a proof sketch based on exponential mixing of the random variables $(\xi_{k})$ and bounds on the joint cumulants. Note that all of the tools used are available in the literature. One can also derive a similar result using block decomposition.

Using the regenerative property of the renewal process, the occurrence of a renewal in a separating interval cuts the dependence between the past and the future. It follows that $(\xi_k)$ is exponentially $\beta$-mixing, \ie\
\begin{align*}
\beta(r)\leq C_1 e^{-c_1 r},\quad r\geq 1,
\end{align*}
for some constants $C_1,c_1>0$ depending only on the law of $Y$ (see, \eg, the mixing background in~\cite{rb05} and regeneration coupling arguments in~\cite{lin02}).

Next, standard cumulant estimates for bounded exponentially mixing sequences (cf.~\cite{b01book,dn07}) imply that there exist constants $A, B>0$ such that, for every $m\geq 2$ and every $i_1,\ldots,i_m\in\dZ$,
\begin{align*}
    \big|\kappa(\xi_{i_1},\ldots,\xi_{i_m})\big|
    \le A^{m-1}(m-1)!\,e^{-B\, T(i_1,\ldots,i_m)},
\end{align*}
where $\kappa(\cdot)$ denotes joint cumulant and $T(i_1,\dots,i_m)$ is the minimal spanning-tree length of $\{i_1,\ldots,i_m\}$ in the graph metric on $\dZ$.
By the multi-linearity of cumulants,
\begin{align*}
\kappa_m(X_t)
=
\sum_{i_1,\dots,i_m=1}^t
v_{i_1}\cdots v_{i_m}\,
\kappa(\xi_{i_1},\ldots,\xi_{i_m}),
\qquad m\geq 1,
\end{align*}
and $\kappa_1(X_t)=\E X_t=0$. Combining the preceding bound with the exponential summability of the kernel $e^{-B|i-j|}$ yields
\begin{align*}
|\kappa_m(X_t)|
\le
K^{m-1} m!\norm{\mvv}_2^2\norm{\mvv}_\infty^{m-2},
\qquad m\geq 2,
\end{align*}
for $K=4A\sum_{r=0}^{\infty}e^{-Br}$.
Finally, for $|\gl|\norm{\mvv}_\infty<(2K)^{-1}$, the cumulant expansion of the log-moment generating function converges absolutely
\begin{align*}
\log \E e^{\gl X_t}
=
\sum_{m\geq 2}\frac{\kappa_m(X_t)}{m!}\gl ^m.
\end{align*}
Hence
\begin{align*}
\log \E e^{\gl X_t}
\le
K \norm{\mvv}_2^2 \sum_{m\geq 2} (K|\gl |\norm{\mvv}_\infty)^{m-2}\gl ^2
=2K\gl ^2\norm{\mvv}_2^2.
\end{align*}
Renaming constants gives the claimed MGF bound. \qed

%%******
\section{Future Work}\label{sec:future-work}
In this paper, we analyze a simple dynamical clustering algorithm for $\dR$-valued data in which each data point attempts to cluster with either its left or right neighbor, chosen uniformly at random. For this model, we establish (under mild assumptions) a unique scaling limit that is independent of the initial data.

We conjecture that (under similar assumptions), a similar scaling limit holds for other realistic dynamic clustering algorithms, such as Algorithm 2 from the introduction of this paper.
For this algorithm, the scaling limit appears to depend on the initial data.
The proof technique used in this work does not apply to Algorithm 2, since the weight sequences under time reversal are no longer integer-valued.
Determining the existence of stationary distributions for this type of dynamics remains an important area of future work. New techniques are required to study these algorithms with greater generality.

Also, an open problem is to identify the limiting non-renewal point process as the unique fixed point of an appropriate stochastic dynamics. Based on simulations, it seems that the weak limit (in the local weak sense) of the genealogy tree exists. Another relevant question is whether we can embed it in a hyperbolic space and obtain a space-time limit that includes both the spatial component and the time dynamics.

\bibliographystyle{alpha}
\bibliography{references}
\end{document}